\newtheorem{thm}{Theorem}
\newtheorem{corr}{Corollary}
\newtheorem{rem}{Remark}
\newtheorem{assum}{Assumption}
\newtheorem{prop}{Proposition}
\newtheorem{defn}{Definition}
\newtheorem{problem}{Problem}
\newenvironment{problem*}[1]
{%
    \ifcsname problem@#1\endcsname
    \expandafter\xdef\csname problem@#1\endcsname
    {%
        \the\numexpr\csname problem@#1\endcsname+1\relax
    }%
    \else
    \expandafter\xdef\csname problem@#1\endcsname{1}%
    \fi
    \edef\thesubproblem
    {%
        \getrefnumber{#1}.\csname problem@#1\endcsname
    }%
    \subproblem
}
{
    \endsubproblem
}
\let\Gin@viewport@code\Gin@trim\expandafter\Gread@parse@vp#1 \\}
\renewcommand{\d}{\mathrm{d}}
\newcommand{\E}{\mathbb{E}}
\renewcommand{\P}{\mathbb{P}}
\newcommand{\R}{\mathbb{R}}
\newcommand{\N}{\mathbb{N}}
\newcommand{\bfy}{\mathbf{y}}
\newcommand{\bfz}{\mathbf{z}}
\newcommand{\bfx}{\mathbf{x}}
\newcommand{\bfX}{\mathbf{X}}
\newcommand{\bfu}{\mathbf{u}}
\newcommand{\bfU}{\mathbf{U}}
\newcommand{\bfw}{\mathbf{w}}
\newcommand{\bfW}{\mathbf{W}}
\newcommand{\bfc}{\mathbf{c}}
\newcommand{\bfd}{\mathbf{d}}
\newcommand{\scriptA}{\mathcal{A}}
\newcommand{\scriptB}{\mathcal{B}}
\newcommand{\scriptQ}{\mathcal{Q}}
\newcommand{\scriptR}{\mathcal{R}}
\newcommand{\scriptD}{\mathcal{D}}
\renewcommand{\i}{\mathrm{i}}
\renewcommand{\Re}{\mathbb{R}}
\begin{document}
\title{Distribution Steering for Discrete-Time Linear Systems with General Disturbances using Characteristic Functions}

\author{Vignesh Sivaramakrishnan, Joshua Pilipovsky, Meeko Oishi, Panagiotis Tsiotras
\thanks{%
V. Sivaramakrishnan is a graduate student with Electrical and Computer Engineering, University of New Mexico, Albuquerque, NM. Email: vigsiv@unm.edu\newline\indent
J. Pilipovsky is a graduate student at the School of Aerospace Engineering, Georgia Institute of Technology, Atlanta, GA. Email: jpilipovsky3@gatech.edu\newline\indent
M. Oishi is a Professor with Electrical and Computer Engineering, University of New Mexico, Albuquerque, NM. Email: oishi@unm.edu\newline\indent
P. Tsiotras is the Andrew and Lewis Chair Professor with the D. Guggen-heim School of Aerospace Engineering, and the Institute for Robotics and Intelligent Machines, Georgia Institute of Technology, Atlanta, GA. Email: tsiotras@gatech.edu
}}
\maketitle

\begin{abstract}
We propose to solve a constrained distribution steering problem, i.e., steering a stochastic linear system from an initial distribution to some final, desired distribution subject to chance constraints.
We do so by characterizing the cumulative distribution function in the chance constraints and by using the absolute distance between two probability density functions using the corresponding characteristic functions.
We consider discrete-time, time-varying linear systems with affine feedback. 
We demonstrate the proposed approach on a 2D double-integrator perturbed by various disturbances and initial conditions. 
\end{abstract} 

\section{Introduction}

In many autonomous systems, the uncertainties that affect the  system evolution are quite complex and non-Gaussian.
For example, in urban air mobility scenarios, uncertainties may arise from the sensing and perception subsystems, the operating environment (i.e., wind gusts, ground effects), the presence of humans in the loop, or from unmodeled physical phenomena (i.e., higher-order nonlinearities in lift or drag forces) among many others.  
Methods that ensure robustness to such uncertainties are important for improving the reliability and robustness of autonomous systems. 
Recently, distribution steering, by which a controller manipulates the stochasticity of the state to guide at states towards a desirable probability distribution, has emerged as a promising approach to \textit{directly} control system uncertainty~\cite{Halder2,balci2020covariance,Chen1,Williams2,Williams1}. 
However, most of these works to date assume Gaussian noise,
In this paper, we describe an approach to synthesize feedback controllers for distribution steering of discrete-time linear systems subject to general (e.g., non-Gaussian) disturbances.  

Recent work in covariance steering, in which a system is steered from an initial Gaussian distribution to a desired Gaussian distribution, captures the covariance and the mean as extended state variables \cite{Max1,pp_kazu,bakolas2018finite,Chen1,JP1,JoshJack}.
However, extending this approach to non-Gaussian disturbances is not straightforward.
For non-Gaussian disturbances, the distribution is characterized by higher order moments and the
computational complexity of the problem increases as the number of moments we need to steer increases. 
Additionally, when incorporating chance, or probabilistic, constraints into the problem formulation, it is not clear how one can make these constraints tractable (for example, as second order cone constraints through Boole's inequality \cite{PP_blackmore,Boole}) with non-Gaussian state evolution, since closed form expressions often do not exist. 
Methods in optimal transport theory \cite{Halder2,Chen1} can steer to and from arbitrary distributions, but presume that the disturbance is a Wiener process (i.e., Gaussian increments).  

\begin{figure}
    \centering
    \pgfmathsetseed{5}
\begin{tikzpicture}
    \foreach \fscale in {1,0.8,...,0.2} {
        \begin{scope}[scale=\fscale]
            \draw [line width=1.6pt-\fscale, black, tension=0.5] plot [smooth cycle] coordinates {
                (-1, 0)
                (-0.9, 0.8)
                (-0.2, 0.6)
                (0.2, 1)
                (0.5, 0.9)
                (1, 0.5)
                (0.8, 0)
                (1, -1)
                (0.2, -1)
                (-0.6, -0.2)
            };
        \end{scope}
    }

    \draw [fill=cyan, fill opacity=0.2, tension=0.5] plot [smooth cycle] coordinates {
        (-1, 0)
        (-0.9, 0.8)
        (-0.2, 0.6)
        (0.2, 1)
        (0.5, 0.9)
        (1, 0.5)
        (0.8, 0)
        (1, -1)
        (0.2, -1)
        (-0.6, -0.2)
    };

    \begin{scope}[xshift=6cm]
        \foreach \fscale in {1,0.8,...,0.2} {
            \begin{scope}[scale=\fscale]
                \draw [line width=1.6pt-\fscale, black, tension=0.8] plot [smooth cycle] coordinates {
                    (-0.8, 0.8)
                    (0.5, 1)
                    (0.8, 0.7)
                    (0.7, 0.2)
                    (0.9, -0.5)
                    (0, -1)
                    (-0.2, -0.8)
                    (-1, -1)
                };
            \end{scope}
        }

        \draw [fill=yellow, fill opacity=0.2, tension=0.8] plot [smooth cycle] coordinates {
            (-0.8, 0.8)
            (0.5, 1)
            (0.8, 0.7)
            (0.7, 0.2)
            (0.9, -0.5)
            (0, -1)
            (-0.2, -0.8)
            (-1, -1)
        };
    \end{scope}

    \draw [line width=1.2pt, orange] (3.5, 1) -- (3.2, 1.1) -- (3.75, -2) -- (4.05, -2.1) -- (3.5, 1) -- (3.2, 1.1) -- (3.75, -2) -- (4.05, -2.1) -- cycle;

    \draw [line width=1.2pt, orange] (2.5, 0.6) -- (2.25, 0.8) -- (1.75, -0.8) -- (2, -1) -- (2.5, 0.6) -- (2.25, 0.8) -- (1.75, -0.8) -- (2, -1) -- cycle;

    \draw [line width=1.2pt, orange] (3.5, 1) -- (4.05, -2.1);

    \draw [line width=1.2pt, orange] (2.5, 0.6) -- (2, -1);

    \node[circle, magenta, fill, inner sep=1.5pt] at (-0.8, 0.4) {};
    \node[circle, magenta, fill, inner sep=1.5pt] at (0.25, 0.5) {};
    \node[circle, magenta, fill, inner sep=1.5pt] at (0.5, -0.5) {};

    \draw [line width=1.5pt, magenta, -latex, tension=0.5] plot [smooth] coordinates{
        (-0.8, 0.4)
        (0 + 0.1*rand, 1 + 0.1*rand)
        (1 + 0.1*rand, 0.8 + 0.1*rand)
        (1.5 + 0.1*rand, 0.5 + 0.1*rand)
        (2.5 + 0.1*rand, 0.5 + 0.1*rand)
        (3 + 0.1*rand, 0.5 + 0.1*rand)
        (3.5 + 0.1*rand, 0.5 + 0.1*rand)
        (4 + 0.1*rand, 0.5 + 0.1*rand)
        (4.5 + 0.1*rand, 0.8 + 0.1*rand)
        (5 + 0.1*rand, 0.8 + 0.1*rand)
        (5.5 + 0.1*rand, 0.8 + 0.1*rand)
        (6 + 0.1*rand, 0.5 + 0.1*rand)
    };
    \draw [line width=1.5pt, magenta, -latex, tension=0.5] plot [smooth] coordinates{
        (0.25, 0.5)
        (0.5 + 0.1*rand, 0.5 + 0.1*rand)
        (1 + 0.1*rand, 0 + 0.1*rand)
        (1.5 + 0.1*rand, 0 + 0.1*rand)
        (2 + 0.1*rand, 0 + 0.1*rand)
        (2.5 + 0.1*rand, 0 + 0.1*rand)
        (3 + 0.1*rand, 0 + 0.1*rand)
        (3.5 + 0.1*rand, 0 + 0.1*rand)
        (4 + 0.1*rand, 0 + 0.1*rand)
        (4.5 + 0.1*rand, 0 + 0.1*rand)
        (5 + 0.1*rand, 0.5 + 0.1*rand)
        (5.5 + 0.1*rand, 0.5 + 0.1*rand)
        (6 + 0.1*rand, 0.1 + 0.1*rand)
    };
    \draw [line width=1.5pt, magenta, -latex, tension=0.5] plot [smooth] coordinates{
        (0.5, -0.5)
        (1 + 0.1*rand, -0.5 + 0.1*rand)
        (1.5 + 0.1*rand, -0.5 + 0.1*rand)
        (2 + 0.1*rand, -0.5 + 0.1*rand)
        (2.5 + 0.1*rand, -0.5 + 0.1*rand)
        (3 + 0.1*rand, -1 + 0.1*rand)
        (3.5 + 0.1*rand, -0.8 + 0.1*rand)
        (4 + 0.1*rand, -1.25 + 0.1*rand)
        (4.5 + 0.1*rand, -1 + 0.1*rand)
        (5 + 0.1*rand, -0.5 + 0.1*rand)
        (5.5 + 0.1*rand, -0.5 + 0.1*rand)
    };

    \draw [line width=1pt, -stealth, shorten >=0.1cm, shorten <=0.1cm] (1, -1) to [bend right=40pt] node [midway, below, xshift=-1cm, yshift=-0.1cm] {\scriptsize $\bfX = \scriptA \bfx_0 + \scriptB \bfU + \mathcal{D} \bfW$} (5, -1);

    \draw [line width=2.2pt, white] (3.2, 1.1) -- (3.75, -2);

    \draw [line width=1.2pt, orange] (3.2, 1.1) -- (3.75, -2);
    \draw [line width=1.2pt, orange] (3.5, 1) -- (3.2, 1.1);
    \draw [line width=1.2pt, orange] (3.75, -2) -- (4.05, -2.1);

    \draw [line width=2.2pt, white] (2.25, 0.8) -- (1.75, -0.8);

    \draw [line width=1.2pt, orange] (2.25, 0.8) -- (1.75, -0.8);
    \draw [line width=1.2pt, orange] (2.5, 0.6) -- (2.25, 0.8);
    \draw [line width=1.2pt, orange] (1.75, -0.8) -- (2, -1);

    \node [draw=none, orange] at (4.3, -1.85) {\scriptsize $\mathcal{X}_{k}$};

    \node [draw=none] at (0.5, -1.3) {\scriptsize $\psi_{\mathbf{x}_{0}}$};
    \node [draw=none] at (6, -1.3) {\scriptsize $\psi_{\mathbf{x}_{f}}$};

\end{tikzpicture}
    \caption{We seek to steer a stochastic system from an initial distribution to a desired final distribution, subject to probabilistic constraints on the state and input.}
    \label{fig:dist_steering_visual}
\end{figure}
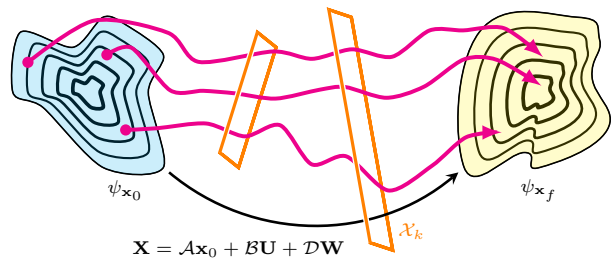

In this paper, we formulate the problem of steering non-Gaussian distributions under affine feedback as the minimization of the absolute distance between the final and desired state probability densities.  We consider a broad class of disturbances for linear systems, which places very few assumptions on the disturbance pdf.
Our approach employs characteristic functions, which
circumvent the need to steer all moments individually, and enable straightforward calculation of the absolute distance between distributions.
The key insight is that expressions for the chance constraints (previously implemented in an open-loop context~\cite{sivaramakrishnan2020convexified,ECF}) and the terminal absolute distance constraint can be represented efficiently using characteristic functions and their compositions.

Section~\ref{sec:prelimform} reviews some preliminaries and formulates the problem we wish to solve. 
We present our analysis of chance constraints within the framework of characteristic functions in Section~\ref{sec:chanceconstraints}.
The terminal density matching constraint is presented in \ref{sec:terminal_density_constraints}.
Section \ref{sec:resulting_optimization} presents our reformulation of the constrained stochastic optimal control problem using characteristic functions. 
Section~\ref{sec:examples} presents an example of a 2D double integrator under various disturbances and initial conditions. 

\section{Preliminaries and Problem formulation}\label{sec:prelimform}

\subsection{Notation and Definitions}

We denote real-valued vectors with lowercase $u \in \mathbb R^m$, matrices with uppercase $V \in \mathbb R^{n \times m}$, and random vectors with bold case $\mathbf{w}\in\R^{p}$.
The $n$-dimension identity matrix is denoted by $I_{n}$, and the $m\times n$ dimensional zero matrix is denoted by $0_{m,n}$.
We define a diagonal matrix as $V = \mathrm{diag}(u)$ and a block diagonal matrix as $V = \mathrm{diag}(V_1,\cdots,V_i\cdots,V_n)$. 
The imaginary unit is denoted by $\mathrm{i}$; given a complex vector $\upvarphi\in\mathbb{C}^p$, its conjugate is denoted by $\overline\upvarphi$.
We denote intervals of integers using $\mathbb{N}_{[a,b]}$ where $a,b\in\mathbb{N},\; a<b$.
The vector $e_{i,d} = [0 \ \cdots \ 1 \ \cdots \ 0]^\intercal\in\R^{d}$ is a basis vector for $\R^{d}$ and isolates the $i$th component of a vector $\psi\in\R^{d}$ by $\psi_{i} = e_{i,d}^\intercal \psi$.

For a random vector $\mathbf{w}$, the probability space is $(\Omega,\mathscr{B}(\Omega),\mathbb{P}_{\mathbf{w}})$ with  $\Omega$ the set of all possible outcomes, $\mathscr{B}(\Omega)$ the Borel $\sigma$-algebra on $\Omega$, and $\mathbb{P}_{\mathbf{w}}$ the probability measure on $\mathscr{B}(\Omega)$ \cite[Sec. 2]{billingsley_probability_2012}.
We consider only random vectors that are continuous, i.e., with probability measure $\mathbb{P}\left(\{\bfw \in \mathcal{S}\}\right) = \int_{\mathcal{S}} \psi_{\bfw}(z)\ \d z$
for $\mathcal{S}\in\mathscr{B}(\Omega)$, and 
probability density function (pdf) $\psi_{\mathbf{w}}$ that satisfies
$\psi_{\mathbf{w}}\geq 0$ almost everywhere (a.e.) and $\int_{\mathbb{R}} \psi_{\mathbf{w}}(z)\ \d z = 1$.
For a random variable $\mathbf{y} = a^\intercal\bfw,\ a\in\mathbb{R}^p$, we denote 
$\mathbb{P}\{a^\intercal\bfw \leq \alpha \}$ by the cumulative distribution function (cdf) $\Phi_{a^\intercal\bfw}:\mathbb{R}\rightarrow[0,1]$ via $\mathbb{P}\{a^\intercal\bfw \leq \alpha \} =  \Phi_{a^\intercal\bfw}(\alpha)$,
which follows by definition \cite[Sec. 14]{billingsley_probability_2012}. 
We write $\bfw \sim \psi_{\mathbf{w}}$ to denote the fact that $\bfw$ is distributed according to the pdf $\psi_{\mathbf{w}}$.
We define the Lebesgue space of measurable pdfs with bounded $d$-norm by $L^d(\mathbb{R}^n)$ where $1\leq d < \infty$.
The Lebesgue norm of a probability density function $\psi_{\bfw}$ is $\|\psi_{\bfw}\|_d = \left(\int_{\mathbb{R}^p} \left|\psi_{\bfw}(z)\right|^d \d z\right)^{1/d}$.
The space of all (continuous) probability density functions forms a subset of $L_1(\R^p)$ since $\psi_{\bfw} \geq 0$ a.e. and $\int_{\R^p} \psi_{\bfw} \ \d z = 1$.

\subsection{Problem Formulation}

Consider the discrete, linear time-varying system 
\begin{equation}
    \mathbf{x}_{k+1} = A_k\mathbf{x}_k + B_k\mathbf{u}_k + D_k\mathbf{w}_k, \quad k\in\N_{[0,N-1]},  \label{eq:discreteDynamics}
\end{equation}
with state $\mathbf{x}_k \in \mathcal{X}_k \subseteq \mathbb R^n$, control input $\mathbf{u}_k \in \mathcal{U}_k \subseteq \mathbb R^m$, disturbance $\mathbf{w}_k \sim \psi_{\bfw,k}$, and matrices $A_k,\ B_k,\ D_k$ of appropriate dimensions.
We assume that the system starts at $\mathbf{x}_0\sim\psi_{\mathbf{x}_0}$, and we seek to steer the final state at $N$ to a desired distribution $\bfx_{N}\sim\psi_{\bfx_f}$.

\begin{assum}
Random vectors and elements of random vectors are independent, but not necessarily identically distributed. 
\end{assum}

Given a deterministic reference trajectory $x_{d,k}\in\mathbb{R}^n$ for  all $k\in\N_{[0,N-1]}$,
we consider the quadratic cost 
\begin{align} \label{eq:COST}
J(\bfu_0,\ldots,\bfu_{N-1}) = 
    \sum_{k=0}^{N-1} \mathbb{E}[(\bfx_k-x_{d,k})^\intercal &Q_k (\bfx_k-x_{d,k}) \nonumber \\
    &+ \bfu_k^\intercal R_k \bfu_k],
\end{align}
where $Q_k\succeq 0$, and $R_k\succ 0$. 
Following the formulation in \cite{okamoto2018optimal}, 

we can concatenate the dynamics \eqref{eq:discreteDynamics} as
\begin{equation}\label{eq:concatdyn}
    \bfX = \scriptA \bfx_0 + \scriptB \bfU + \scriptD \bfW,
\end{equation}
where $\bfX = [\bfx_0^\intercal, \ldots, \bfx_N^{\intercal}]^\intercal\in\R^{(N+1)n}, \ \bfU = [\bfu_0^\intercal,\ldots,\bfu_{N-1}^\intercal]^\intercal\in\R^{Nm}, \ \bfW = [\bfw_0^\intercal,\ldots,\bfw_{N-1}^\intercal]^\intercal\in\R^{Np}$, 
for some  matrices $\scriptA\in\mathbb{R}^{(N+1)n\times n},\ \scriptB\in\mathbb{R}^{(N+1)n\times Nm}$, and $\scriptD\in\mathbb{R}^{(N+1)n\times Np}$. 
The concatenated disturbance follows the distribution $\bfW\sim\psi_{\bfW} = \prod^{N-1}_{k=0}\psi_{\bfw,k}$.
Probabilistic constraints are imposed on the state and input, namely,
\begin{subequations}
    \label{eq:sysCC}
    \begin{align}
        \P\left\{\bigwedge_{k=1}^{N}E_k\bfX\in\mathcal{X}_k\right\}\geq 1 - \Delta_X, \label{eq:stateCC} \\
        \P\left\{\bigwedge_{k=0}^{N-1}F_k\bfU\in\mathcal{U}_k\right\}\geq 1 - \Delta_U, \label{eq:inputCC}
    \end{align}
\end{subequations}%
where $\mathcal{X}_k = \cap_{j=1}^{N_X} \{x\in\R^{n} : \alpha_{j,k}^{\intercal} x \leq \beta_{j,k}\}$ and $\mathcal{U}_k = \cap_{j=1}^{N_U}\{u : a_{j}^\intercal u \leq b_{j}\}$ are polytopic sets defined as intersecting hyperplanes, and where $E_k = [0_{n \times nk},I_n,0_{n(N-k)\times n}]$, and $F_k = [0_{m \times mk},I_m,0_{m(N-k-1) \times m}]$ isolate the $k^{\mathrm{th}}$ element of the state and input, respectively, and $\Delta_X,\Delta_U \in [0,1)$ are constraint violation thresholds.

Using \eqref{eq:concatdyn}, the cost in \eqref{eq:COST} can be re-written as
\begin{equation}
\label{eq:concatCost}
    J(\bfU) = \E\left[(\bfX - X_d)^\intercal \mathcal{Q} (\bfX - X_d) + \bfU^\intercal \mathcal{R} \bfU\right],
\end{equation}
with $\mathcal{Q}=\textrm{diag}\left(Q_0,\ldots,Q_{N-1}\right)$, $\mathcal{R}=\textrm{diag}\left(R_0,\ldots,R_{N-1}\right)$, and $X_d = \left[x_{d,0}\ \cdots\ x_{d,N}\right]^\intercal \in\R^{(N+1)n}$.
Since $Q_k \succeq 0$ and $R_k \succ 0$, $\forall k \in \N_{[0,N-1]}$, it follows that $\mathcal{Q} \succeq 0$ and $\mathcal{R} \succ 0$.

\begin{problem}
\label{prob:stoc}
Solve the optimization problem
\begin{subequations}
    \begin{align}
        \underset{\bfU}{\mathrm{minimize}} &\;\;   J(\bfU),  \\
        \mathrm{subject\ to}&\;\; \eqref{eq:concatdyn},\ \eqref{eq:sysCC}, \ \mathrm{and} \\
        &E_0 \bfX \sim \psi_{\bfx_0}, \ E_N \bfX \sim \psi_{\bfx_f}, \ \bfW \sim \psi_{\bfW}.
    \end{align}
\end{subequations}%
\end{problem}%
The goal of Problem \ref{prob:stoc} is to minimize the quadratic cost \eqref{eq:concatCost}, satisfy the constraints in \eqref{eq:concatdyn}, \eqref{eq:sysCC}, while steering the state of (\ref{eq:discreteDynamics}) from the given initial distribution $\psi_{\bfx_0}$ to the desired terminal state distribution $\psi_{\bfx_f}$.

\subsection{Characteristic Functions}

One way to represent the underlying system stochasticity is via characteristic functions (CF). 
\begin{defn}[]
For a continuous random vector $\bfw\in\mathbb{R}^{p}$ such that $\bfw \sim \psi_{\mathbf{w}}$, the CF is defined by the Fourier transform $\mathcal{F}\{\psi_{\bfw}\}(t)$ 
of its pdf,
\begin{align}
    \upvarphi_{\bfw}(t) = \mathbb{E}_{\bfw}[\exp(\i t^\intercal\bfw)]
    = \int_{\mathbb{R}^p}e^{\i t^\intercal z} \psi_{\bfw}(z)\, \d z,
\end{align} 
where $t,z\in\mathbb{R}^{p}$.
\end{defn}
The CF  has the following properties~\cite{lukacs_characteristic_1970,ushakov_selected_1999}: 
\begin{itemize}
    \item It is uniformly continuous.
    \item $\upvarphi_{\bfw}(0) = 1$.
    \item It is bounded, i.e., $|\upvarphi_{\bfw}(t)| \leq 1,$ for all $t \in \R^{p}$.
    \item It is Hermitian, i.e., $\upvarphi_{\bfw}(-t) = \overline{\upvarphi}_{\bfw}(t)$.
\end{itemize}

\begin{assum}
The CF $\upvarphi_{\bfw}$ is absolutely integrable, that is, it is an element of $L_1(\Re^p)$.
\end{assum}

To recover the pdf from its CF, we use the following result. 

\begin{thm}[Inversion Theorem for pdfs, {\cite[Theorem 1.2.6]{ushakov_selected_1999}}]
If the CF $\upvarphi_{\bfw} \in L_1(\mathbb{R}^p)$, then the pdf can be recovered via the inverse Fourier transform $\mathcal{F}^{-1}\{\upvarphi_{\bfw}\}(z)$,

\begin{equation}
    \psi_{\bfw}(z) =  \left(\frac{1}{2\pi}\right)^{p}\int_{\R^{p}}e^{-\mathrm{i} t^\intercal z} \upvarphi_{\bfw}(t) \, \d t.
    \label{eq:inv_FT}
\end{equation}
\end{thm}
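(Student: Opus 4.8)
\noindent\textit{Proof sketch.} The plan is to establish the inversion formula by a Gaussian mollification (damping) argument. For $\sigma>0$ introduce the damped inverse transform
\begin{equation}
g_\sigma(z) \;=\; \left(\frac{1}{2\pi}\right)^{p}\int_{\R^{p}} e^{-\i t^\intercal z}\,e^{-\sigma^2\|t\|^2/2}\,\upvarphi_{\bfw}(t)\,\d t ,
\end{equation}
which is well defined, bounded, and continuous in $z$ for each fixed $\sigma>0$ simply because $|\upvarphi_{\bfw}|\le 1$ and the Gaussian factor is integrable in $t$ (so here the $L_1$ hypothesis is not yet needed).

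First I would expand $\upvarphi_{\bfw}(t)=\int_{\R^p}e^{\i t^\intercal x}\psi_{\bfw}(x)\,\d x$ inside $g_\sigma$ and interchange the two integrations. This is legitimate by Fubini's theorem, since on $\R^p\times\R^p$ the modulus of the integrand is $\psi_{\bfw}(x)\,e^{-\sigma^2\|t\|^2/2}$, whose double integral is finite and equals $(2\pi/\sigma^2)^{p/2}$ (using $\int_{\R^p}\psi_{\bfw}=1$). Evaluating the resulting inner Gaussian integral in $t$ then yields
\begin{equation}
g_\sigma(z)\;=\;\int_{\R^p}\psi_{\bfw}(x)\,\phi_\sigma(z-x)\,\d x\;=\;(\psi_{\bfw}*\phi_\sigma)(z),\qquad \phi_\sigma(y)=\frac{1}{(2\pi\sigma^2)^{p/2}}\,e^{-\|y\|^2/(2\sigma^2)} .
\end{equation}

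The crux is then to let $\sigma\downarrow 0$ in two complementary ways. On one side, $e^{-\sigma^2\|t\|^2/2}\uparrow 1$ pointwise while staying bounded by $1$; since $\upvarphi_{\bfw}\in L_1(\R^p)$ by hypothesis, the dominated convergence theorem gives $g_\sigma(z)\to f(z):=(2\pi)^{-p}\int_{\R^{p}}e^{-\i t^\intercal z}\upvarphi_{\bfw}(t)\,\d t$ for every $z$, with a dominating bound independent of $z$, so the convergence is uniform and $f$ is continuous. On the other side, $\{\phi_\sigma\}_{\sigma>0}$ is an approximate identity ($\phi_\sigma\ge 0$, $\int\phi_\sigma=1$, mass concentrating at the origin), so $g_\sigma=\psi_{\bfw}*\phi_\sigma\to\psi_{\bfw}$ in $L_1(\R^p)$ by the standard translation-continuity argument. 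Extracting a subsequence $\sigma_n\downarrow 0$ along which the $L_1$ convergence also holds almost everywhere and matching it against the everywhere-limit $f$ forces $\psi_{\bfw}=f$ a.e.; since $f$ is continuous, this identifies the (continuous version of the) density with the stated inverse Fourier integral, which is the claim.

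The only genuinely delicate point is reconciling these two modes of convergence of $g_\sigma$ — pointwise/uniform to $f$ via dominated convergence, and $L_1$ to $\psi_{\bfw}$ via the approximate identity — and it is precisely in the dominated-convergence step that the integrability hypothesis $\upvarphi_{\bfw}\in L_1(\R^p)$ is indispensable (and also what guarantees the right-hand side defines an honest continuous, bounded function). The remaining ingredients — Fubini, the closed form of the Gaussian integral, and the approximate-identity limit — are routine.
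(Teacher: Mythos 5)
Your argument is correct, but note that the paper itself offers no proof of this statement: it is imported verbatim as Theorem 1.2.6 of Ushakov's book, so there is no in-paper argument to compare against. What you have written is the standard Gaussian-mollification proof of Fourier inversion, and all the key steps check out: the Fubini interchange is properly justified by the finiteness of $\int_{\R^p}\int_{\R^p}\psi_{\bfw}(x)e^{-\sigma^2\|t\|^2/2}\,\d t\,\d x$; the closed-form Gaussian integral does give $g_\sigma=\psi_{\bfw}*\phi_\sigma$ with the normalization you state; dominated convergence (with dominating function $|\upvarphi_{\bfw}|\in L_1$) gives uniform convergence of $g_\sigma$ to the continuous function $f$; the approximate-identity property gives $L_1$ convergence of $g_\sigma$ to $\psi_{\bfw}$; and passing to an a.e.-convergent subsequence identifies $\psi_{\bfw}=f$ a.e., which is the correct reading of the conclusion for a density defined up to null sets (or exactly, for its continuous version). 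You also correctly isolate where the hypothesis $\upvarphi_{\bfw}\in L_1(\R^p)$ enters. The only cosmetic remark is that continuity of $f$ follows directly from $\upvarphi_{\bfw}\in L_1$ by dominated convergence, without needing the uniform limit; but that is a matter of taste, not a gap.
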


Below, we summarize useful properties of CFs.
Let $\bfw_1,\bfw_2,\bfw,\mathbf{z}$ be random vectors of appropriate dimensions.

\begin{enumerate}
	\item 
	If $\mathbf{z} = \bfw_1 + \bfw_2$, then $\psi_{\mathbf{z}}(z) = \big(\psi_{\bfw_1} * \psi_{\bfw_2}\big)(z)$ (i.e., convolution of their pdfs), and $\upvarphi_{\mathbf{z}}(t) = \upvarphi_{\bfw_1}(t)\upvarphi_{\bfw_2}(t)$ \cite[Sec. 21.11]{cramer_mathematical_1999}.
	
	\item 
	If $\bfz = F\bfw + g$ for $F \in \mathbb{R}^{n\times p}, \ g\in\R^{n}$, then $\upvarphi_{\bfz}(t) = \exp(\i t^\intercal g)\upvarphi_{\bfw}(F^\intercal t)$ \cite[Sec. 22.6]{cramer_mathematical_1999}.
	
	\item 
	Given $\bfw_1$ and $\bfw_2$, then $\bfz = [\bfw_1^\intercal, \bfw_2^\intercal]^\intercal$ has the pdf $\psi_{\bfz}(z) = \psi_{\bfw_1}(e_1^\intercal z_1)\psi_{\bfw_2}(e_2^\intercal z_2), z = [z_1^\intercal, z_2^\intercal]^\intercal$, and CF $\upvarphi_{\bfz}(t) = \upvarphi_{\bfw_1}(e_1^\intercal t)\upvarphi_{\bfw_2}(e_2^\intercal t)$, $t=[t_1^\intercal,t_2^\intercal]^\intercal$, 
	where $e_1$ and $e_2$ isolate the first and second component of the vector, respectively \cite[Sec. 22.4]{cramer_mathematical_1999}.
	
	\item 
	If $\bfz = [\bfz_1 ~ \cdots ~\bfz_i~ \cdots ~\bfz_p]^\intercal \in \mathbb{R}^p$ is a vector of scalar random variables $\mathbf{z}_i$ 
	with pdfs
	$\psi_{\bfz_i},$ then the pdf of $a^\intercal\bfz$,   $a\in\mathbb{R}^p$, is $\psi_{a^\intercal\bfz}(z) = \prod_{i=1}^{p}\psi_{\bfz_i}(e_{i,p}^\intercal a z)$, and the CF is $\upvarphi_{a^\intercal\bfz}(t) = \upvarphi_{\bfz}(t_{\bfz}) =  \prod_{i=1}^{p}\upvarphi_{\bfz_i}(t_i)$, for $t_{\bfz} = a t$, and $t_{i} = e_{i,p}^\intercal t_{\bfz}$ \cite[Sec. 22.4]{cramer_mathematical_1999}.
\end{enumerate}

We can also recover the cdf via the CF using the following theorem. 
\begin{thm}[Gil-Pelaez Inversion Theorem,\cite{gil-pelaez_note_1951,ushakov_selected_1999}]
\label{thm:GPI}
Given a random variable $\mathbf{y}$ with CF $\upvarphi_{\mathbf{y}}$ and pdf $\psi_{\bfy}$ satisfying the property that $\int_{\mathbb{R}} \log(1+|x|) \psi_{\bfx}(z) \d z < \infty$, 
then the cdf of $\mathbf{y}$, $\Phi_{\mathbf{y}}$, at each point $y$ that is continuous, can be evaluated by
\begin{equation}\label{eq:cftocdf}
         \Phi_{\mathbf{y}}(y) = \frac{1}{2}-\frac{1}{\pi}\int^{\infty}_{0}\frac{1}{t}\mathrm{Im}\left[\exp{(-\mathrm{i}t y)} \,\upvarphi_{\mathbf{y}}(t)\right] \, \d t,
\end{equation}
where $y,t\in\mathbb{R}$. 
\end{thm}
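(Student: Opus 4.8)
The plan is to convert the oscillatory $t$-integral in \eqref{eq:cftocdf} into an expectation of a sign function by means of the Dirichlet integral $\int_0^{\infty}\tfrac{\sin(\theta t)}{t}\,\d t=\tfrac{\pi}{2}\,\mathrm{sgn}(\theta)$, $\theta\in\R$. The key algebraic step is that, by the definition of $\upvarphi_{\bfy}$ and linearity of expectation, for every $t>0$
\[
\mathrm{Im}\!\left[e^{-\i t y}\upvarphi_{\bfy}(t)\right]
=\mathrm{Im}\,\E\!\left[e^{\i t(\bfy-y)}\right]
=\E\!\left[\sin\!\big(t(\bfy-y)\big)\right],
\]
so the integrand of \eqref{eq:cftocdf} is, up to the factor $1/t$, the expectation of a sine.

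Granting for the moment that the $t$-integration and the expectation may be swapped, substituting this identity into \eqref{eq:cftocdf} yields
\[
\frac{1}{2}-\frac{1}{\pi}\int_0^{\infty}\frac{1}{t}\,\mathrm{Im}\!\left[e^{-\i t y}\upvarphi_{\bfy}(t)\right]\d t
=\frac{1}{2}-\frac{1}{\pi}\,\E\!\left[\int_0^{\infty}\frac{\sin(t(\bfy-y))}{t}\,\d t\right].
\]
Applying the Dirichlet integral pathwise with $\theta=\bfy-y$, the inner integral equals $\tfrac{\pi}{2}\,\mathrm{sgn}(\bfy-y)$ almost surely, so the right-hand side becomes $\tfrac12-\tfrac12\,\E[\mathrm{sgn}(\bfy-y)]$. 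Since $y$ is a continuity point of $\Phi_{\bfy}$ we have $\P\{\bfy=y\}=0$, hence $\E[\mathrm{sgn}(\bfy-y)]=\P\{\bfy>y\}-\P\{\bfy<y\}=(1-\Phi_{\bfy}(y))-\Phi_{\bfy}(y)=1-2\Phi_{\bfy}(y)$; substituting back gives exactly $\Phi_{\bfy}(y)$, which is the claim.

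The main obstacle, and the only place the hypothesis $\int_{\R}\log(1+|z|)\,\psi_{\bfy}(z)\,\d z<\infty$ enters, is justifying the interchange: the map $t\mapsto\sin(t(\bfy-y))/t$ is only conditionally integrable on $(0,\infty)$, so the Fubini--Tonelli theorem does not apply directly. I would truncate at a finite $T$ and set $S_T(\theta):=\int_0^{T}\tfrac{\sin(\theta t)}{t}\,\d t=\mathrm{sgn}(\theta)\,\mathrm{Si}(|\theta|T)$, where $\mathrm{Si}$ is the sine integral. Using $|\sin(t\theta)/t|\le\min(|\theta|,1/t)$ one gets $\int_0^{T}\tfrac{|\sin(t(\bfy-y))|}{t}\,\d t\le 1+\log^{+}T+\log(1+|\bfy-y|)$, whose expectation is finite because $\log(1+|\bfy-y|)\le\log(1+|y|)+\log(1+|\bfy|)$ and $\E[\log(1+|\bfy|)]<\infty$ by hypothesis; hence Fubini applies on $(0,T)\times\Omega$ and
\[
\int_0^{T}\frac{1}{t}\,\mathrm{Im}\!\left[e^{-\i t y}\upvarphi_{\bfy}(t)\right]\d t=\E\big[S_T(\bfy-y)\big].
\]
Because $\mathrm{Si}$ is bounded on $[0,\infty)$, the family $S_T$ is bounded uniformly in $T$ and $\theta$ and $S_T(\theta)\to\tfrac{\pi}{2}\,\mathrm{sgn}(\theta)$ as $T\to\infty$; the dominated convergence theorem then lets me pass $T\to\infty$ on the right, which simultaneously shows that the improper integral in \eqref{eq:cftocdf} converges and equals $\tfrac{\pi}{2}\,\E[\mathrm{sgn}(\bfy-y)]$, completing the proof. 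Equivalently, one could instead start from the L\'{e}vy inversion formula for $\Phi_{\bfy}(b)-\Phi_{\bfy}(a)$ at continuity points, use the Hermitian symmetry $\upvarphi_{\bfy}(-t)=\overline{\upvarphi}_{\bfy}(t)$ to fold the integral over $(-T,T)$ onto $(0,T)$, and let $a\to-\infty$; the truncation estimate above is exactly what controls that final limit.
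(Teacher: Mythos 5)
Your proof is correct. The paper does not actually prove Theorem~\ref{thm:GPI} --- it is quoted from \cite{gil-pelaez_note_1951,ushakov_selected_1999} --- and your argument is essentially the classical one from those sources: reduce $\mathrm{Im}\left[e^{-\i t y}\upvarphi_{\bfy}(t)\right]=\E[\sin(t(\bfy-y))]$ to the Dirichlet integral, truncate at $T$ to justify the interchange (your bound $\int_0^T t^{-1}|\sin(t(\bfy-y))|\,\d t\lesssim 1+\log^{+}T+\log(1+|\bfy-y|)$ correctly identifies the logarithmic-moment hypothesis as what controls the $t\to 0$ end), and use the uniform boundedness of $\mathrm{Si}$ with dominated convergence to pass $T\to\infty$ and evaluate $\E[\mathrm{sgn}(\bfy-y)]$ at a continuity point.
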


\begin{rem}
The requirement $\int_{\mathbb{R}} \log(1+|x|) \psi_{\bfx}(z) \d z < \infty$ is a mild condition which is satisfied by many distributions~\cite{witkovsky_numerical_2016}, including those used in this paper.  
\end{rem}

\subsection{Proposed Distribution Steering Controller}

The following assumption outlines the controller structure used in this work.

\begin{assum}[Feedback law]
The controller in \eqref{eq:concatdyn} has an affine state feedback structure, given by
\begin{equation}
    \bfu_k = \sum_{i=0}^{k} L_{k,i} \bfx_i + g_k. \label{eq:feedback}
\end{equation}
Concatenating these vectors yields $\bfU = L \bfX + g$, where $L\in\R^{Nm \times (N+1)n}$ is a lower block triangular matrix and $g = [g_0^\intercal,\ldots,g_{N-1}^\intercal]^\intercal\in\R^{Nm}$.
\end{assum}

Note that this feedback law uses the full state \textit{history} to determine the control input at every time step $k$, as opposed to just using the current state $\bfx_k$.

\begin{prop}[Affine disturbance feedback \cite{okamoto2018optimal}]
    The feedback law in (\ref{eq:feedback}) results in the state and input sequences
    \begin{subequations}
    \label{eq:newStateControl}
        \begin{align}
            \bfX &= (I - \scriptB L)^{-1}(\scriptA \bfx_0 + \scriptD \bfW + \scriptB g), \label{eq:newdyn}\\
            \bfU &= L(I - \scriptB L)^{-1}(\scriptA \bfx_0 + \scriptD \bfW + \scriptB g) + g. \label{eq:newinput}
        \end{align}
    \end{subequations}
\end{prop}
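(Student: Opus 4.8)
The plan is to substitute the feedback law directly into the concatenated dynamics and solve for $\bfX$. Substituting $\bfU = L\bfX + g$ into \eqref{eq:concatdyn} gives $\bfX = \scriptA\bfx_0 + \scriptB(L\bfX + g) + \scriptD\bfW$, which rearranges to $(I - \scriptB L)\bfX = \scriptA\bfx_0 + \scriptD\bfW + \scriptB g$. Once $I - \scriptB L$ is shown to be invertible, left-multiplying by $(I - \scriptB L)^{-1}$ yields \eqref{eq:newdyn}; substituting that expression for $\bfX$ back into $\bfU = L\bfX + g$ then yields \eqref{eq:newinput}. Everything in this step is routine algebraic rearrangement.

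The key step, and the only non-routine one, is establishing that $I - \scriptB L$ is invertible. I would argue this from the causal block structure of $\scriptB$ and the lower-block-triangularity of $L$. By construction $\scriptB$ maps the input stack to the state stack via $\bfx_j = \sum_i [\scriptB]_{j,i}\bfu_i$, and because $\bfx_{k+1}$ in \eqref{eq:discreteDynamics} depends only on $\bfu_k$ (and on earlier states), the block $[\scriptB]_{j,i}$ vanishes unless $i \le j-1$; hence $\scriptB$ is strictly lower block triangular. Since $L$ is lower block triangular by the feedback-law assumption, the product $\scriptB L \in \R^{(N+1)n \times (N+1)n}$ is also strictly lower block triangular, hence nilpotent with $(\scriptB L)^{N+1} = 0$. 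Therefore $I - \scriptB L$ is invertible, with inverse given by the finite Neumann series $\sum_{j=0}^{N} (\scriptB L)^j$, and the formulas \eqref{eq:newStateControl} are well defined.

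I expect the write-up to be brief. The main subtlety to get right is making the block-structure bookkeeping precise — tracking the index ranges of the nonzero blocks of $\scriptB$, of $L$, and of their product — so that nilpotency, and therefore invertibility, is genuinely justified rather than merely asserted. This argument parallels the one in \cite{okamoto2018optimal}; after it, only substitution and collection of terms remain.
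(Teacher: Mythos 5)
Your proof is correct and follows essentially the same route as the paper's, which simply substitutes $\bfU = L\bfX + g$ into \eqref{eq:concatdyn}, solves for $\bfX$, and back-substitutes. Your additional argument that $\scriptB L$ is strictly lower block triangular, hence nilpotent, so that $I - \scriptB L$ is invertible, is a welcome piece of rigor that the paper's one-line proof omits entirely.
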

\begin{proof}
    Plugging (\ref{eq:feedback}) into \eqref{eq:concatdyn} yields (\ref{eq:newdyn}).
    Similarly, plugging (\ref{eq:newdyn}) into (\ref{eq:feedback}) yields (\ref{eq:newinput}).
\end{proof}

\begin{corr}
Given the affine disturbance feedback terms, 
\begin{subequations}
\label{eq:newController}
    \begin{align}
        K &= L(I - \scriptB L)^{-1}, \label{eq:Kdef} \\
        v &= L(I - \scriptB L)^{-1} \scriptB g + g, \label{eq:vdef}
    \end{align}
\end{subequations}
then the state and input sequences in (\ref{eq:newStateControl}) can be equivalently written as 
    \begin{subequations}
    \label{eq:newnewStateControl}
        \begin{align}
            \bfX &= (I + \scriptB K)(\scriptA \bfx_0 + \scriptD \bfW) + \scriptB v, \label{eq:newnewdyn}\\
            \bfU &= K(\scriptA \bfx_0 + \scriptD \bfW) + v, \label{eq:newnewinput}
        \end{align}
    \end{subequations}
\end{corr}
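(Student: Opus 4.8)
The plan is to take the equivalent state and input representations from Proposition 1 and substitute the change-of-variables definitions in \eqref{eq:newController}. The whole argument is algebraic: I would start from \eqref{eq:newdyn}, written as $\bfX = (I - \scriptB L)^{-1}(\scriptA \bfx_0 + \scriptD \bfW) + (I - \scriptB L)^{-1}\scriptB g$, and show that $(I - \scriptB L)^{-1} = I + \scriptB K$ with $K$ as in \eqref{eq:Kdef}, and that $(I - \scriptB L)^{-1}\scriptB g = \scriptB v$ with $v$ as in \eqref{eq:vdef}.

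For the first identity I would verify $(I - \scriptB L)(I + \scriptB K) = I$ directly: expanding gives $I + \scriptB K - \scriptB L - \scriptB L \scriptB K = I + \scriptB(K - L - L\scriptB K) = I + \scriptB\big((I - \scriptB L)^{-1} - L - L\scriptB(I-\scriptB L)^{-1}\big)$, and the bracketed term is $(I - L\scriptB)(I - \scriptB L)^{-1} - L$; since $(I - L\scriptB)(I - \scriptB L)^{-1}$ is not obviously $L + \text{something}$, I would instead factor as $\big(I - \scriptB L - \scriptB L(I-\scriptB L)\big)(I-\scriptB L)^{-1}$... actually the cleanest route is simply $\scriptB K = \scriptB L (I - \scriptB L)^{-1}$, so $I + \scriptB K = I + \scriptB L(I-\scriptB L)^{-1} = \big((I - \scriptB L) + \scriptB L\big)(I - \scriptB L)^{-1} = (I - \scriptB L)^{-1}$, which is the desired identity with a one-line push-through. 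For the second identity, $(I-\scriptB L)^{-1}\scriptB g = \big(I + \scriptB L(I-\scriptB L)^{-1}\big)\scriptB g = \scriptB g + \scriptB L(I-\scriptB L)^{-1}\scriptB g = \scriptB\big(g + L(I-\scriptB L)^{-1}\scriptB g\big) = \scriptB v$ by \eqref{eq:vdef}. Substituting both into \eqref{eq:newdyn} yields \eqref{eq:newnewdyn}.

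For \eqref{eq:newnewinput}, I would substitute \eqref{eq:newnewdyn} into the feedback relation; since $\bfU = L\bfX + g$ and $\bfX = (I + \scriptB K)(\scriptA\bfx_0 + \scriptD\bfW) + \scriptB v$, we get $\bfU = L(I+\scriptB K)(\scriptA\bfx_0 + \scriptD\bfW) + L\scriptB v + g$. Then $L(I + \scriptB K) = L(I - \scriptB L)^{-1} = K$ by \eqref{eq:Kdef} and the identity just established, and $L\scriptB v + g = L\scriptB v + g$; comparing with \eqref{eq:vdef}, $v = L(I-\scriptB L)^{-1}\scriptB g + g = K\scriptB g + g$, and one checks $L\scriptB v + g = v$ by expanding $v$, or more directly uses \eqref{eq:newinput} which already states $\bfU = L(I-\scriptB L)^{-1}(\scriptA\bfx_0 + \scriptD\bfW + \scriptB g) + g = K(\scriptA\bfx_0 + \scriptD\bfW) + K\scriptB g + g = K(\scriptA\bfx_0 + \scriptD\bfW) + v$, which is \eqref{eq:newnewinput}.

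The only subtlety — and the part I would be most careful about — is the invertibility of $I - \scriptB L$ (and implicitly that the push-through manipulations are valid): this is inherited from Proposition 1, which already presumes $(I - \scriptB L)^{-1}$ exists because $\scriptB L$ is strictly lower block triangular (hence nilpotent), so there is no real obstacle, only bookkeeping. The proof will be two or three lines: invoke \eqref{eq:newStateControl}, use $(I-\scriptB L)^{-1} = I + \scriptB K$, and collect the $g$-terms into $\scriptB v$ and $v$.
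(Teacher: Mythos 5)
Your proposal is correct and follows essentially the same route as the paper: a direct algebraic substitution of the definitions of $K$ and $v$ into the expressions from Proposition 1, with the simplification resting on the push-through identity $(I-\scriptB L)^{-1} = I + \scriptB L(I-\scriptB L)^{-1} = I+\scriptB K$. Your version is, if anything, slightly more careful, since you make that identity and the invertibility of $I-\scriptB L$ explicit where the paper leaves them as an unstated ``simplifying'' step.
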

\begin{proof}
    We substitute \eqref{eq:Kdef} and \eqref{eq:vdef} into \eqref{eq:newinput}, and substitute $\bfU$ into \eqref{eq:concatdyn},
    to obtain
    \begin{subequations}
    \begin{align}
        \bfX &= \scriptA \bfx_0 + \scriptB \left(L(I - \scriptB L)^{-1}(\scriptA \bfx_0 + \scriptD \bfW + \scriptB g) + g\right) \nonumber\\
        & \hspace{17em}+ \scriptD \bfW,  \label{eq:dyn_middle}\\
    \bfU    &= (I+\scriptB L(I - \scriptB L)^{-1})(\scriptA \bfx_0 + \scriptD \bfW)\nonumber \\
        & \hspace{10.3em} + L(I - \scriptB L)^{-1} \scriptB g + g.\label{eq:control_middle}
    \end{align}
    \end{subequations}
    Simplifying \eqref{eq:dyn_middle} and \eqref{eq:control_middle} yields the desired result \eqref{eq:newnewdyn} and \eqref{eq:newnewinput}.
\end{proof}

\section{Chance Constraints with Affine Feedback via Characteristic Functions}\label{sec:chanceconstraints}

In this section, we present a decomposition of the chance constraints using Boole's inequality via
affine disturbance feedback and represent the probabilistic constraints (also known as chance constraints) using CFs.
The resulting constraint is an integral transform over the linear system and the polytopic constraints.


\subsection{Reformulation of Chance Constraints}

\subsubsection{State Chance Constraints}

The joint state chance constraints in (\ref{eq:stateCC}) can be transformed into a set of \textit{individual} chance constraints through Boole's inequality \cite{ono2008iterative,farina_stochastic_2016,mesbah2016stochastic}
\begin{align}
    \hspace{-0.5em}\P\{\alpha_{j,k}^{\intercal} E_k \mathbf{X} \leq \beta_{j,k}\} \geq 1 - \delta^x_{j,k}, \quad \sum_{k=1}^{N}\sum_{j=1}^{N_X} \delta^x_{j,k} \leq \Delta_X,
\end{align}  
equivalently,
\begin{align} \label{eq:indivChanceConstraints}
\hspace{-0.5em}\Phi_{\alpha_{j,k}^{\intercal} E_k \mathbf{X}}\left(\beta_{j,k}\right) \geq 1 - \delta^x_{j,k}, \quad \sum_{k=1}^{N}\sum_{j=1}^{N_X} \delta^x_{j,k} \leq \Delta_X,
\end{align}
where $\delta^x_{j,k} \in [0,\Delta_X),\  k\in\mathbb{N}_{[1,N]},\  j\in\mathbb{N}_{[1,N_X]}$.
Plugging \eqref{eq:newnewdyn} into \eqref{eq:indivChanceConstraints} yields
\begin{align}
\Phi_{\mathbf{c}_{j,k}}(\beta_{j,k} - \alpha_{j,k}^\intercal E_k \scriptB v)\geq 1 - \delta^x_{j,k}, \label{eq:FullStateCC}
\end{align}
where $\bfc_{j,k}= \alpha_{j,k}^{\intercal} E_k (I + \scriptB K) (\scriptA \bfx_0 + \scriptD \bfW)$.

\subsubsection{Input Chance Constraints}

Similar to the state chance constraints, we transform the joint chance constraints in (\ref{eq:inputCC}) into a set of individual chance constraints as follows
    \begin{equation}
        \P\{a_j^{\intercal} F_k \mathbf{U} \leq b_j\} \geq 1 - \delta_{j,k}^{u}, \quad
        \sum_{k=1}^{N}\sum_{j=1}^{N_U} \delta_{j,k}^{u} \leq \Delta_U.
        \end{equation}
        Equivalently,
    \begin{align} \label{eq:chanceConstraintsInput}
  \Phi_{a_j^{\intercal} F_k \mathbf{U}}(b_j) \geq 1 - \delta_{j,k}^{u}, \quad
    \sum_{k=1}^{N}\sum_{j=1}^{N_U} \delta_{j,k}^{u} \leq \Delta_U,
    \end{align}
where $\delta_{j,k}^{u}\in[0,\Delta_U), \  k\in\mathbb{N}_{[0,N-1]}, \  j\in\mathbb{N}_{[1,N_U]}$.
Using (\ref{eq:newnewinput}) in the chance constraints (\ref{eq:chanceConstraintsInput}) yields 
\begin{equation}
    \Phi_{\bfd_{j,k}}(b_j - a_j^\intercal F_k v) \geq 1 - \delta_{j,k}^{u}, \label{eq:fullInputCC}
\end{equation}
where $\bfd_{j,k} = a_j^\intercal F_k K(\scriptA \bfx_0 + \scriptD \bfW)$.

\subsection{Encoding Chance Constraints in the Presence of Affine Feedback}
 
To illustrate how to encode the chance constraints (\ref{eq:FullStateCC}) and (\ref{eq:fullInputCC}) via CFs, consider first  the state chance constraint in (\ref{eq:FullStateCC}). 
Expanding the random variable $\bfc_{j,k}$ we can write
\begin{align}
	\bfc_{j,k} = \mu_{\bfc,j,k}^\intercal \bfx_0 + \nu_{\bfc,j,k}^\intercal \bfW,
\end{align}
where $\mu_{\bfc,j,k}^\intercal = \alpha_{j,k}^\intercal E_k (I + \mathcal{B} K)\mathcal{A}$ and $\nu_{\bfc,j,k}^\intercal = \alpha_{j,k}^\intercal E_k (I + \mathcal{B} K)\scriptD$ are non-random variables that are linear in the decision variable $K$.
Under Assumption~1, Property 3 of CFs allows us to 
decompose $\upvarphi_{\bfc_{j,k}}$ as 
\begin{equation}
\label{eq:decomp1}
    \upvarphi_{\bfc_{j,k}}(t) = \upvarphi_{\mu_{\bfc,j,k}^\intercal\bfx_0}(t)\upvarphi_{\nu_{\bfc,j,k}^\intercal\bfW}(t). 
\end{equation}
Next, Property 2 yields
\begin{subequations}
\label{eq:decomp2}
    \begin{align}
        \upvarphi_{\mu_{\bfc,j,k}^\intercal\bfx_0}(t) &= \upvarphi_{\bfx_0}(t_{\bfx_0}^{\bfc}), \label{eq:decomp2a} \\
        \upvarphi_{\nu_{\bfc,j,k}^\intercal\bfW}(t) &= \upvarphi_{\bfW}(t_{\bfW}^{\bfc}). \label{eq:decomp2b}
    \end{align}
\end{subequations}
where $t_{\bfx_0}^{\bfc} = \mu_{\bfc,j,k} t$ and $t_{\bfW}^{\bfc} = \nu_{\bfc,j,k} t$.
Finally, Property 4 yields
\begin{subequations}
\label{eq:decomp3}
    \begin{align}
        \upvarphi_{\bfx_0}(t_{\bfx_0}^{\bfc}) &= \prod_{i=1}^{n}\upvarphi_{\bfx_{0,i}}(t_{\bfx_{0,i}}^{\bfc}), \label{eq:decomp3a} \\
        \upvarphi_{\bfW}(t_{\bfW}^{\bfc}) &= \prod_{i=1}^{pN} \upvarphi_{\bfW_{i}}(t_{\bfW_i}^{\bfc}). \label{eq:decomp3b}
    \end{align}
    where $t_{\bfx_{0,i}}^{\bfc} = e_{i,n}^\intercal t_{\bfx_0}^{\bfc}$ and $t_{\bfW_i}^{\bfc} = e_{i,pN}^\intercal t_{\bfW}^{\bfc}$. 
\end{subequations}%
Theorem 1 gives an analytical expression for the cdf of $\bfc_{j,k}$ evaluated at $\gamma_{j,k} = \beta_{j,k} - \alpha_{j,k}^\intercal E_k \scriptB v$ as 
\begin{align}
	\Phi_{\bfc_{j,k}}(\gamma_{j,k}) =
	\frac{1}{2} - \frac{1}{\pi}\int_{0}^{\infty} \frac{1}{t}\mathrm{Im}\left(e^{-\mathrm{i} t\gamma_{j,k}}\upvarphi_{\bfc_{j,k}}(t)\right) \d t, \label{eq:stateCCencoding}
\end{align}
where $\upvarphi_{\bfc_{j,k}}(t)$ is given in \eqref{eq:decomp1}.
Thus, (\ref{eq:stateCCencoding}) provides a means to compute the cdf in the state chance constraints \eqref{eq:FullStateCC}, and encodes the decision variables $K$ and $v$ through Theorem 1.

Similarly, we can also derive the input chance constraints in \eqref{eq:fullInputCC} for the random variable $ \bfd_{j,k}$ by rewriting
\begin{equation}
    \bfd_{j,k} = \mu^\intercal_{\bfd,j,k} \bfx_0 + \nu^\intercal_{\bfd,j,k} \bfW,
\end{equation}
where $\mu^\intercal_{\bfd,j,k} = a_j^\intercal F_k K\scriptA$ and $\nu^\intercal_{\bfd,j,k} = a_j^\intercal F_k K \scriptD$.
From Property 3 of CFs, the CF of $\bfd_{j,k}$ is therefore
\begin{equation}
    \upvarphi_{\bfd_{j,k}}(t) = \upvarphi_{\mu_{\bfd,j,k}^\intercal\bfx_0}(t)\upvarphi_{\nu_{\bfd,j,k}^\intercal\bfW}(t). \label{eq:decomp4}
\end{equation}
Further, by Properties 2 and 4, we have
\begin{subequations}
\begin{align}
    \upvarphi_{\mu_{\bfd,j,k}^\intercal\bfx_0}(t) &= \upvarphi_{\bfx_0}(t_{\bfx_0}^{\bfd}) = \prod_{i=1}^{n}\upvarphi_{\bfx_{0,i}}( t_{\bfx_{0,i}}^{\bfd}), \label{eq:decomp5a} \\
    \upvarphi_{\nu_{\bfd,j,k}^\intercal\bfW}(t) &= \upvarphi_{\bfW}(t_{\bfW}^{\bfd}) = \prod_{i=1}^{pN} \upvarphi_{\bfW_{i}}(t_{\bfW_i}^{\bfd}) \label{eq:decomp5b}
\end{align}
where $t_{\bfx_0}^{\bfd} = \mu_{\bfd,j,k} t,$  $t_{\bfW}^{\bfd} = \nu_{\bfd,j,k} t$, $t_{\bfx_{0,i}}^{\bfd} = e_{i,n}^\intercal t_{\bfx_0}^{\bfd}$, 
and $t_{\bfW_i}^{\bfd} = e_{i,pN}^\intercal t_{\bfx_0}^{\bfd}$.
\end{subequations}
Thus, the expression for the cdf of $\bfd_{j,k}$ evaluated at $\gamma_{\bfd,j,k} = b_{j} - a_{j}^\intercal F_k v$ is given by 
\begin{align}
	\Phi_{\bfd_{j,k}}(\gamma_{j,k}) =
	\frac{1}{2} - \frac{1}{\pi}\int_{0}^{\infty} \frac{1}{t}\mathrm{Im}\left(e^{-\mathrm{i} t\gamma_{\bfd,j,k}}\upvarphi_{\bfd_{j,k}}(t)\right) \d t. \label{eq:inputCCencoding}
\end{align}
Note that the constraints encoded by the CF in \eqref{eq:stateCCencoding} and \eqref{eq:inputCCencoding} result in \textit{nonlinear} constraints in terms of the decision variables $K$ and $v$.


\section{Terminal Density Constraints}\label{sec:terminal_density_constraints}

Our approach aims at  matching probability densities using the machinery of characteristic functions. 
The benefit of using characteristic functions to match between densities as opposed to other metrics such as KL-divergence or Wasserstein distance is two-fold.
First, it can be shown that the largest absolute difference between two pdfs is bounded by the $L_1$ difference of their CFs. 
Second, this holds for all distributions (including mixture distributions) which have a CF in $L_1(\mathbb{R}^n)$ and directly results in an explicit integral expression over the frequency domain, and not an integration over the entire state-space~\cite{balci2020covariance}.
This is convenient, as it is difficult to formulate the terminal constraints analytically in the state-space due to the non-Gaussian state evolution requiring several convolutions at each 
time step (see Property 1 of operations on CFs). 

Next, we first derive a joint distribution representation which results in an $n$-dimensional integral. 
We then show that due to the independence property of the disturbances, we can compute this integral using $n$ separate matching constraints at the final time.


\subsection{Joint CF Representation of the Terminal Density}

Using Properties 1 and 2 of operations on CFs, the joint CF of the terminal state $\bfx_{N}$ is 
    \begin{align}
        \label{eq:ENX_CF}
        \upvarphi_{\bfx_{N}}(t) &= \prod_{i=1}^{n} \upvarphi_{\bfx_{N},i}(t_i) \nonumber \\
        &=\prod_{i=1}^{n} \exp(\i \sigma_i^\intercal t)\upvarphi_{\bfx_{0}}(t_{\bfx_{0}}^{N})\upvarphi_{\bfW}(t_{\bfW}^{N}), 
        \end{align}
where
\begin{align}
    \upvarphi_{\bfx_0}(t_{\bfx_0}^{N}) &= \prod_{j=1}^{n}\upvarphi_{\bfx_{0,j}}(t_{\bfx_{0,j}}^{N}), \label{eq:decomp6a} \\
    \upvarphi_{\bfW}(t_{\bfW}^{N}) &= \prod_{j=1}^{pN} \upvarphi_{\bfW_{j}}(t_{\bfW_j}^{N}), \label{eq:decomp6b}
\end{align}
and where
$t_{\bfx_0}^{N} = \mu_i t$, $t_{\bfx_0,j}^{N} = e_{j,n}^\intercal t_{\bfx_0}^{N}, t_{\bfW}^{N} = \nu_i t$, $t_{\bfW_j}^{N} = e_{j,pN}^\intercal t_{\bfW}^{N}$, and $\mu_i^\intercal = e_{i,n}^\intercal E_N(I + \scriptB K)\scriptA, \ \nu_i^\intercal = e_{i,p}^\intercal E_N(I + \scriptB K)\scriptD$, and $\sigma_i^\intercal = e_{i,m}^\intercal E_N \scriptB v$. 
Similarly, the joint CF of the \textit{desired} terminal state is 
 \begin{align}       
    \upvarphi_{\bfx_f}(t) &= \prod_{i=1}^{n} \upvarphi_{\bfx_{f,i}}(t_i).
    \label{eq:Xd_CF}
\end{align}

We now introduce the $L_1$ distance as an upper bound on the maximum $L_1$ deviation between two probability distributions.

\begin{thm}[{\cite[Sec. 1.4]{ushakov_selected_1999}}]
\label{thm:joint_matching}
If the joint pdf for the terminal state of the system is $\psi_{\bfx_{N}}$ with CF \eqref{eq:ENX_CF} and the desired joint pdf is $\psi_{\bfx_f}$ with CF \eqref{eq:Xd_CF}, then 
\begin{align}
    \label{eq:joint_matching_1}
\Delta_{\psi_{\bfx_N}}(K,v) =&\sup_{z\in\mathbb{R}^n} | \psi_{\bfx_{N}}(z; K,v) -  \psi_{\bfx_f}(z)| \leq D(K,v),
\end{align}%
where 
\begin{align}
    \label{eq:joint_matching_2}
    D(K,v) &= \left(\frac{1}{2\pi}\right)^n \|\upvarphi_{\bfx_{N}} - \upvarphi_{\bfx_{f}}\|_{1} \nonumber \\
    &=\left(\frac{1}{2\pi}\right)^n \int_{\mathbb{R}^n}\left|\upvarphi_{\bfx_{N}}(t) -  \upvarphi_{\bfx_f}(t)\right| \d t. 
\end{align}
\end{thm}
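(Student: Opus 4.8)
The plan is to establish the inequality \eqref{eq:joint_matching_1} as a direct consequence of the pdf inversion formula \eqref{eq:inv_FT} applied to the difference of the two densities. The starting observation is that, since both $\upvarphi_{\bfx_N}$ and $\upvarphi_{\bfx_f}$ are characteristic functions in $L_1(\mathbb{R}^n)$ (by Assumption~2, together with the fact that finite products of $L_1$-CFs arising from the linear maps here inherit integrability), their difference $\upvarphi_{\bfx_N}-\upvarphi_{\bfx_f}$ is also in $L_1(\mathbb{R}^n)$, so Theorem~1 applies to recover the corresponding density difference $\psi_{\bfx_N}-\psi_{\bfx_f}$ pointwise via the inverse Fourier transform. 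I would then write, for arbitrary $z\in\mathbb{R}^n$,
\begin{align*}
\psi_{\bfx_N}(z;K,v) - \psi_{\bfx_f}(z)
= \left(\frac{1}{2\pi}\right)^n \int_{\mathbb{R}^n} e^{-\i t^\intercal z}\bigl(\upvarphi_{\bfx_N}(t) - \upvarphi_{\bfx_f}(t)\bigr)\,\d t .
\end{align*}

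Next I would take absolute values on both sides and move the modulus inside the integral by the triangle inequality for integrals, using $|e^{-\i t^\intercal z}| = 1$ for all $t,z$:
\begin{align*}
|\psi_{\bfx_N}(z;K,v) - \psi_{\bfx_f}(z)|
\leq \left(\frac{1}{2\pi}\right)^n \int_{\mathbb{R}^n} \bigl|\upvarphi_{\bfx_N}(t) - \upvarphi_{\bfx_f}(t)\bigr|\,\d t
= D(K,v).
\end{align*}
Since the right-hand side is independent of $z$, it is an upper bound for the pointwise difference uniformly in $z$, hence also for the supremum over $z\in\mathbb{R}^n$, which gives $\Delta_{\psi_{\bfx_N}}(K,v) = \sup_{z}|\psi_{\bfx_N}(z;K,v)-\psi_{\bfx_f}(z)| \leq D(K,v)$. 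The identification of $D(K,v)$ with $(2\pi)^{-n}\|\upvarphi_{\bfx_N}-\upvarphi_{\bfx_f}\|_1$ is then just the definition of the $L_1$ norm.

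The main obstacle — really the only non-cosmetic point — is justifying that $\upvarphi_{\bfx_N}$ (and $\upvarphi_{\bfx_f}$) genuinely lie in $L_1(\mathbb{R}^n)$ so that the inversion theorem is legitimately invoked; the dependence on the decision variables $K,v$ enters only through the linear maps $\mu_i,\nu_i,\sigma_i$ in \eqref{eq:ENX_CF}, and one should check these products of one-dimensional $L_1$-CFs remain integrable in $n$ dimensions (the phase factors $\exp(\i\sigma_i^\intercal t)$ have unit modulus and do not affect integrability). Given the standing assumptions in the paper this is taken as granted, so the proof reduces to the two short displayed steps above; everything else — passing from pointwise bound to the supremum, and rewriting $D$ as an $L_1$ norm — is immediate. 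I would also briefly note that the bound holds for every admissible $(K,v)$, which is exactly what makes $D(K,v)$ usable as a surrogate constraint in the optimization of Section~\ref{sec:resulting_optimization}.
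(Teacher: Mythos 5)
Your proposal is correct and follows essentially the same route as the paper's Appendix~A proof: apply the inversion formula \eqref{eq:inv_FT} to the difference of the CFs, bound the modulus of the integral by the integral of the modulus using $|e^{-\i t^\intercal z}|=1$, and note that the resulting bound is uniform in $z$ so it dominates the supremum. Your version is, if anything, slightly cleaner (you use linearity of the inverse transform directly rather than the paper's reverse-triangle-inequality phrasing), and your side remark about verifying $\upvarphi_{\bfx_N},\upvarphi_{\bfx_f}\in L_1(\mathbb{R}^n)$ correctly identifies the one hypothesis the paper leaves to Assumption~2.
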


\begin{proof}
See Appendix A.
\end{proof}

\begin{corr}
Let $\epsilon > 0$ such that $D(K,v)< \epsilon$ for some $K$ and $v$.
Then, $\sup_{z\in\mathbb{R}^n}\left|\psi_{\bfx_{N}}(z; K,v) -  \psi_{\bfx_f}(z)\right|\leq \epsilon$.
\end{corr}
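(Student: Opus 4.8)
The plan is to obtain the corollary as an immediate consequence of Theorem~\ref{thm:joint_matching}, chaining two inequalities and performing no new analysis. First I would note that the pair $(K,v)$ appearing in the hypothesis is admissible in the sense required by Theorem~\ref{thm:joint_matching}: the factorization \eqref{eq:ENX_CF} of $\upvarphi_{\bfx_N}$ was already established above using Assumption~1 (independence of $\bfx_0$ and of the components of $\bfW$) together with the stated CF properties, and Assumption~2 keeps the resulting CF in $L_1(\R^n)$, so the hypotheses of Theorem~\ref{thm:joint_matching} are satisfied for this $(K,v)$.

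Second, I would invoke the conclusion of Theorem~\ref{thm:joint_matching} for this $(K,v)$, namely
\[
\sup_{z\in\R^n}\bigl|\psi_{\bfx_N}(z;K,v) - \psi_{\bfx_f}(z)\bigr| \;=\; \Delta_{\psi_{\bfx_N}}(K,v) \;\le\; D(K,v),
\]
and then combine it with the hypothesis $D(K,v) < \epsilon$. By transitivity of $\le$ on $\R$ this gives $\sup_{z\in\R^n}\bigl|\psi_{\bfx_N}(z;K,v) - \psi_{\bfx_f}(z)\bigr| \le D(K,v) < \epsilon$, which in particular yields the stated bound $\sup_{z\in\R^n}\bigl|\psi_{\bfx_N}(z;K,v) - \psi_{\bfx_f}(z)\bigr| \le \epsilon$. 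One actually obtains the strict inequality; the weaker statement with $\le \epsilon$ is retained because $\epsilon$ will subsequently be treated as a user-prescribed tolerance entering the optimization reformulation of Section~\ref{sec:resulting_optimization}.

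There is essentially no obstacle here: the corollary merely re-expresses Theorem~\ref{thm:joint_matching} in the form that is convenient downstream, where $D(K,v)$ plays the role of a constraint function bounded by $\epsilon$. The only point worth stating explicitly in the write-up is the applicability check in the first step, and that is inherited directly from the problem assumptions rather than argued afresh; the remainder is a one-line transitivity argument.
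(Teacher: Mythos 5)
Your proposal is correct and matches what the paper intends: the corollary is an immediate consequence of Theorem~\ref{thm:joint_matching}, obtained by chaining $\sup_{z\in\R^n}|\psi_{\bfx_N}(z;K,v)-\psi_{\bfx_f}(z)| \le D(K,v) < \epsilon$, which is why the paper states it without a separate proof. Your remarks on the applicability of the theorem and on retaining the non-strict inequality are sensible but not essential.
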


\subsection{Matching Densities}

We derive a simpler representation of the $n-$dimensional integral of the joint representation in \eqref{eq:joint_matching_1}. 
Specifically, we construct $n$ separate density matching expressions with respect to each terminal state variable, that is, for all $i \in \mathbb{N}_{[1,n]}$,
\begin{align}
    \label{eq:joint_matching_indiv}
&\Delta_{\psi_{\bfx_N},i}(K,v)=\sup_{z_i\in\mathbb{R}} \left| \psi_{\bfx_{N},i}(z_i; K,v) -  \psi_{\bfx_{f,i}}(z_i)\right|\nonumber \\
&\hspace{16em} \leq D_i(K,v),
\end{align}%
where 
\begin{equation}
\label{eq:marginal_matching}
        D_i(K,v) = \frac{1}{2\pi} \int_{\mathbb{R}} \left|\upvarphi_{\bfx_{N},i}(t_i) -  \upvarphi_{\bfx_f,i}(t_i)\right| \d t,
\end{equation}
which follows from Theorem \ref{thm:joint_matching}.
The next result provides
 a relationship between \eqref{eq:joint_matching_2} and \eqref{eq:marginal_matching}. 

\begin{thm}  \label{Theorem4}
Suppose there exists $(K,v)$ such that, for all $i\in \mathbb{N}_{[1,n]}$, $D_i(K,v)\leq \epsilon_i$.
Then, $D(K,v) \leq (1/2\pi)^{n-1}\epsilon$, where $\epsilon = \sum_i\epsilon_i$. 
\end{thm}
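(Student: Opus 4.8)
The plan is to bound the $n$-dimensional integral $D(K,v) = (1/2\pi)^n \int_{\R^n} |\upvarphi_{\bfx_N}(t) - \upvarphi_{\bfx_f}(t)| \, \d t$ by exploiting the product structure of both CFs. From \eqref{eq:ENX_CF} and \eqref{eq:Xd_CF}, each CF factors as a product over the $n$ coordinates: $\upvarphi_{\bfx_N}(t) = \prod_{i=1}^n \upvarphi_{\bfx_N,i}(t_i)$ and $\upvarphi_{\bfx_f}(t) = \prod_{i=1}^n \upvarphi_{\bfx_f,i}(t_i)$, where $t = [t_1,\ldots,t_n]^\intercal$. The first step is the standard telescoping identity for a difference of products: writing $a_i := \upvarphi_{\bfx_N,i}(t_i)$ and $b_i := \upvarphi_{\bfx_f,i}(t_i)$, we have
\begin{equation}
\prod_{i=1}^n a_i - \prod_{i=1}^n b_i = \sum_{i=1}^n \Bigl(\prod_{j<i} a_j\Bigr)(a_i - b_i)\Bigl(\prod_{j>i} b_j\Bigr).
\end{equation}
Taking absolute values and using the triangle inequality together with the bound $|\upvarphi(\cdot)| \leq 1$ (Property 3 of CFs) on every factor $a_j$ and $b_j$ that is not the ``active'' difference term gives the pointwise estimate $|\upvarphi_{\bfx_N}(t) - \upvarphi_{\bfx_f}(t)| \leq \sum_{i=1}^n |\upvarphi_{\bfx_N,i}(t_i) - \upvarphi_{\bfx_f,i}(t_i)|$.

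The second step is to integrate this over $\R^n$. By Tonelli's theorem, the integral of the $i$-th summand factors: the variable $t_i$ is integrated against $|\upvarphi_{\bfx_N,i}(t_i) - \upvarphi_{\bfx_f,i}(t_i)|$, yielding $2\pi D_i(K,v)$ by \eqref{eq:marginal_matching}, while each of the remaining $n-1$ integrations is over a function of a single variable $t_j$ that does not appear in the summand, contributing a factor of $\int_\R 1\, \d t_j$. This is where the argument needs care — a naive application of this idea produces a divergent factor $\int_\R \d t_j = \infty$. The resolution must be that the remaining factors are not identically $1$: in the telescoped term for index $i$, the factors $\prod_{j<i} a_j$ and $\prod_{j>i} b_j$ are genuine CF values bounded by $1$, but to get a finite bound one keeps one of them — say $\prod_{j \neq i} b_j$ after also bounding the $a_j$'s by $1$ — and invokes that $\upvarphi_{\bfx_f,j} \in L_1(\R)$ with $\|\upvarphi_{\bfx_f,j}\|_1 \leq 2\pi \sup_z \psi_{\bfx_f,j}(z) < \infty$, or more cleanly that by Assumption~3 and the inversion theorem each marginal CF has a finite $L_1$ norm. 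Thus the $i$-th term contributes $(1/2\pi)^n \cdot 2\pi D_i \cdot \prod_{j\neq i}\|\upvarphi_{\bfx_f,j}\|_1$; absorbing the constants and summing over $i$ should collapse to $(1/2\pi)^{n-1}\sum_i \epsilon_i = (1/2\pi)^{n-1}\epsilon$, which forces the normalization $\|\upvarphi_{\bfx_f,j}\|_1 = 2\pi$ (or equivalently $\sup_z \psi_{\bfx_f,j}(z)$ being normalized so the bookkeeping works) — I would check the paper's intended constant convention here rather than assume it.

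The main obstacle is precisely this constant-chasing: making the $n-1$ ``spectator'' integrations finite and tracking exactly which $(1/2\pi)$ powers survive, since the claimed bound $(1/2\pi)^{n-1}\epsilon$ has one fewer power of $1/2\pi$ than $D$'s prefactor $(1/2\pi)^n$, meaning exactly one spectator integral must be ``worth'' a factor of $2\pi$ and the rest must cancel against the $D_i$ normalizations. I would organize the proof to make the telescoping bound first, then integrate term-by-term with Tonelli, and handle the constants by substituting the explicit relation $\int_\R |\upvarphi_{\bfx_f,j}(t)|\,\d t = 2\pi \psi_{\bfx_f,j}(0) \le 2\pi$ (valid since a pdf is bounded by... — actually this also needs the pdf bounded, which follows from Assumption~3 via the inversion theorem \eqref{eq:inv_FT}). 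Everything else is routine.
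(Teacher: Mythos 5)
Your telescoping decomposition is precisely the paper's key step: the paper sets $a_i = \upvarphi_{\bfx_N,i}(t_i)$, $b_i = \upvarphi_{\bfx_f,i}(t_i)$, invokes $|\prod_i a_i - \prod_i b_i| \le \sum_i |a_i - b_i|$ using $|a_i|,|b_i|\le 1$, and then asserts that the result ``follows immediately from the definition of $D(K,v)$.'' You are right to be suspicious of that last step: integrating the pointwise bound $\sum_i |\upvarphi_{\bfx_N,i}(t_i)-\upvarphi_{\bfx_f,i}(t_i)|$ over $\R^n$ produces divergent spectator integrals $\int_\R \d t_j$ for $j\neq i$, since each summand depends on $t_i$ alone. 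The paper's proof does not address this; it silently treats the $n$-fold integral of the $i$-th summand as the one-dimensional integral $\int_\R|a_i-b_i|\,\d t_i = 2\pi D_i$. So the defect you flagged is real, and it is present in the published argument, not only in your own.

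Your proposed repair, however, does not close the gap. The relation $\int_\R|\upvarphi_{\bfx_f,j}(t)|\,\d t = 2\pi\psi_{\bfx_f,j}(0)\le 2\pi$ fails on both counts: the inversion formula \eqref{eq:inv_FT} gives $\psi(0)=\frac{1}{2\pi}\int_\R\upvarphi(t)\,\d t$ \emph{without} the absolute value, so one only gets $\|\upvarphi\|_1\ge 2\pi\psi(0)$; and $\psi(0)$ is a density value, not a probability, so it is not bounded by $1$ (for $\mathcal N(0,\sigma^2)$ the CF is $e^{-\sigma^2 t^2/2}$ with $\|\upvarphi\|_1=\sqrt{2\pi}/\sigma$, unbounded as $\sigma\to 0$). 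Carrying your Tonelli computation through honestly, the $i$-th term contributes $(1/2\pi)^{n}\bigl(\prod_{j<i}\|\upvarphi_{\bfx_N,j}\|_1\bigr)(2\pi\epsilon_i)\bigl(\prod_{j>i}\|\upvarphi_{\bfx_f,j}\|_1\bigr)$, and the spectator products are not $\le (2\pi)^{n-1}$ in general, so the stated constant does not emerge. Indeed no bookkeeping can rescue the constant as written: take $n=2$ with $\upvarphi_{\bfx_N,1}=\upvarphi_{\bfx_f,1}$ equal to a standard Gaussian CF (so one may take $\epsilon_1=0$); then $D=(1/2\pi)^2\|\upvarphi_{\bfx_N,1}\|_1\,\|\upvarphi_{\bfx_N,2}-\upvarphi_{\bfx_f,2}\|_1=(1/2\pi)\sqrt{2\pi}\,D_2$, which exceeds the claimed bound $(1/2\pi)\epsilon_2$ with $\epsilon_2=D_2$. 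The inequality therefore needs either a constant involving $\prod_j\|\upvarphi_{\bfx_f,j}\|_1$ (or a uniform bound on these marginal $L_1$ norms as an added hypothesis) or a reformulation; your instinct to ``check the paper's intended constant convention'' was correct, but there is no convention under which $\|\upvarphi\|_1\le 2\pi$ holds for all admissible marginals.
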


\begin{proof}
See Appendix B. 
\end{proof}

\section{Resulting optimization problem}\label{sec:resulting_optimization}

With the elements derived for both the chance constraints and the terminal distribution constraint, we present the resulting optimization problem. 


\begin{problem}
Solve the optimization problem
    \begin{subequations}
    \label{opt:resulting_optimization}
    \begin{align}
        \underset{K,v,\delta^x,\delta^u}{\mathrm{min}} &\;\; J(K,v) + \sum_{i=1}^{n} \lambda_i D_i(K,v) \label{eq:fullCost} \\
        \mathrm{s.t.}&\;\; \Phi_{\mathbf{c}_{j,k}}(\beta_{j,k} - \alpha_{j,k}^\intercal E_k \scriptB v) \geq 1 - \delta^x_{j,k},\\
        &\;\; \Phi_{\bfd_{j,k}}(b_j - a_j^\intercal F_k v) \hspace{2.1em} \geq 1 - \delta_{j,k}^{u}, \\
        &\;\; \sum_{k=1}^{N}\sum_{j=1}^{N_X} \delta^x_{j,k} \leq \Delta_X,\hspace{1em} \sum_{k=1}^{N}\sum_{j=1}^{N_U} \delta_{j,k}^{u} \leq \Delta_U,
    \end{align}
\end{subequations}%
where $J(K,v)$ is given by
\begin{align} 
     &\left[(I + \scriptB K)(\scriptA \mathbb{E}[\bfx_0] + \scriptD \mathbb{E}[\bfW]) + \scriptB v - X_d\right]^\intercal \scriptQ\cdot \nonumber \\
    &\quad\,\left[(I + \scriptB K)(\scriptA \mathbb{E}[\bfx_0] + \scriptD \mathbb{E}[\bfW]) + \scriptB v - X_d \right] \nonumber \\
    &+ \left[K(\scriptA \mathbb{E}[\bfx_0] + \scriptD \mathbb{E}[\bfW]) + v\right]^\intercal \scriptR \nonumber \\
    &+ \mathrm{tr}\left[\big((I + \scriptB K)^\intercal \scriptQ (I + \scriptB K) + K^\intercal \scriptR K\big)\Sigma\right] \label{eq:newCost},
\end{align}
where  $\Sigma = \scriptA \Sigma_{\bfx_0} \scriptA^\intercal + \scriptD \Sigma_{\bfW} \scriptD^\intercal$. 
\end{problem}

We treat the matching constraint as a soft constraint, as in~\cite{balci2020covariance}. 
By penalizing this $L_1$ distance in the cost, we provide flexibility to the underlying nonlinear program solver and enable increased feasibility.

\section{Numerical Example - 2D Double Integrator}\label{sec:examples}

We demonstrate our approach on a 2D double integrator with different disturbances and initial conditions.
Consider the system (\ref{eq:discreteDynamics}) 
with state $x = [x\ \dot x \ y\ \dot y]^\intercal$. 
The expressions for the system matrices $A_k,\ B_k,$ and $D_k$ are given in \cite{JoshJack}
with $\Delta T = 1$ and $N = 5$.
We assume polytopic state constraints $\mathcal{X}_k$, with
$\alpha_{1k} =     
    \begin{bmatrix}
    1 & 1 & 0 & 0\\
    \end{bmatrix},\
    \beta_{1k} = 
    12.75,\ 
    \alpha_{2k} =     
    \begin{bmatrix}
    1 & 0.1 & 0 & 0\\
    \end{bmatrix},\
    \beta_{2k} = 
    8.75$
for $k\in\mathbb{N}_{[1,N]}$, and assume $\bfx_0\sim\psi_{x_0}$ must be within the state polytopic constraints, as well.
We let $\mathcal{U}_k = [-4,4]^2$ for $k\in\mathbb{N}_{[0,N-1]}$.
The desired trajectory $X_d$ is interpolated from waypoints $(4,5)$ to $(8,5)$ for $k\in\mathbb{N}_{[0,2]}$ and from $(9,5)$ to $(7.875,3)$ for $k\in\mathbb{N}_{[3,5]}$. 
We seek to drive the final state to $\bfx_{N} \sim \psi_{\bfx_f} = \mathcal{N}(\mu_{\bfx_f},\Sigma_{\bfx_f})$ with mean 
$\mu_{\bfx_f} = [7.75\ 2\ 0\ 0]^\intercal$ and variance $\Sigma_{\bfx_f} = \mathrm{diag}([0.06\ 0.006\ 0.6\ 0.006])$. 
We choose $\Delta_X = 0.1$ and $\Delta_U = 0.1$, and 
$Q_k = \mathrm{diag}([10\ 1\ 10\ 1])$, $R_k = \mathrm{diag}([1\ 1])$ for $i\in\mathbb{N}_{[0,N-1]}$. The weighting of the distance metrics in the cost is $\lambda = [10\ 1\ 10\ 1]^\intercal$.

All computations were done in MATLAB with an Intel Core i9-10900K processor and 64GB RAM.
The optimization problems were solved using \texttt{fmincon}. 
The CF inversion \eqref{eq:stateCCencoding} uses CharFunTool \cite{witkovsky_numerical_2016} and the density matching constraint in \eqref{eq:marginal_matching} was implemented using trapezoidal quadrature.
We used $10^4$ Monte-Carlo samples to verify average state and input constraint violation (denoted as $\Delta_{\rm X,MC}$ and $\Delta_{\rm U,MC}$, respectively) and cost (denoted as $J_{\rm MC}(K,v)$). 

\subsection{Standard Gaussian Distribution}
\label{subsec:gauss}

To validate our approach, we first considered $\bfx_0\sim \psi_{\bfx_0} = \mathcal{N}(\mu_{\bfx_0},\Sigma_{\bfx_0})$
with $\mu_{\bfx_0} = [4\ 0\ 5\ 0]^\intercal$ and $\Sigma_{\bfx_0} = \mathrm{diag}([0.18\ 0.002\ 0.18\ 0.002])$;
the disturbance is $\mathcal{N}(\mu_{\bfW},\Sigma_{\bfW})$ with mean $\mu_{\bfW} = [0\ 0]^\intercal$ and variance $\Sigma_{\bfW} = \mathrm{diag}([1\ 1])$ for the entire horizon. 
As shown in Figure \ref{fig:gaussian2Dstate}, our method drives the system to follow the reference trajectory, while not significantly violating the state constraints (Table~\ref{tab:cost_violation}).
Likewise, input violation is minimal, as shown in Figure~\ref{fig:gaussian2Dinput} and 
Table~\ref{tab:cost_violation}. 
Since we steer the state from an initial Gaussian distribution to a final Gaussian distribution, the maximum deviation between the final and desired pdfs and the corresponding $L_1$ distances are small (Table \ref{tab:pdf_deviation}). 

\subsection{Heavy Tail - Laplace Distribution}
\label{subsec:laplace}

Heavy-tailed distributions are of interest as they decay much more slowly than Gaussians, but with a similar mean and variance. 
The Laplace distribution has the pdf $\mathcal{L}_{\mu,\beta}(x) =  \exp\left(-{|x-\mu|}/{\beta}\right)/ 2\beta$.
We assume the initial condition $\bfx_0 \sim \mathcal{L}(\mu_{\bfx_0},\beta_{\bfx_0})$
with location $\mu_{\bfx_0} = [4\ 0\ 5\ 0]^\intercal$ and scale $\beta_{\bfx_0} = [0.3\ 0.01\ 0.3\ 0.01]^\intercal$. 
The disturbance also follows a Laplace distribution $\bfW\sim\mathcal{L}(\mu_{\bfW},\beta_{\bfW})$ with location
$\mu_{\bfW} = [0\ 0]^\intercal$ and scale $\beta_{\bfW} = [1\ 1]^\intercal$.
Although the Laplace distribution is not smooth (Figure \ref{fig:laplace2Dstate}), 
our method is able to steer to the final desired density with little constraint violation (Table \ref{tab:cost_violation}). 
The input in Figure \ref{fig:laplace2Dinput} shows that there is some violation of the bounds, but it is within the violation threshold (Table \ref{tab:cost_violation}).
The larger deviation between the final and desired pdfs (Table \ref{tab:pdf_deviation}) reflects the fact that we modify a random variable that is not Gaussian so as to behave like a Gaussian one.

\subsection{Mixture Distributions - Normal Mixture}
\label{subsec:multiGauss}

Lastly, we consider a Gaussian mixture with $\bfx_0\sim \psi_{\bfx_0} = 0.5\mathcal{N}(\mu_{\bfx_{0,1}},\Sigma_{\bfx_{0,1}}) + 0.5\mathcal{N}(\mu_{\bfx_{0,2}},\Sigma_{\bfx_{0,2}})$ with means $\bfx_{0,1} = [4\ 0\ 5\ 0]^\intercal$, $\bfx_{0,2} = [3.5\ 0.1\ 3.5\ 0.1]^\intercal$ and covariances $\Sigma_{\bfx_{0,1}} = \mathrm{diag}([0.3\ 0.01\ 0.3\ 0.01])$, $\Sigma_{\bfx_{0,2}} = \mathrm{diag}([0.1\ 0.01\ 0.5\ 0.01])$. 
The disturbance is a Gaussian mixture, $\bfW\sim \psi_{\bfW} = 0.5\mathcal{N}(\mu_{\bfW_{1}},\Sigma_{\bfW_1}) + 0.5\mathcal{N}(\mu_{\bfW_{2}},\Sigma_{\bfW_{2}})$, with means $\mu_{\bfW_{1}} = [0\ 0.1]^\intercal$, $\mu_{\bfW_{2}} = [0.1\ 0]^\intercal$ and covariances $\Sigma_{\bfW_{1}} = \mathrm{diag}([1\ 1])$, $\Sigma_{\bfW_{2}} = \mathrm{diag}([1\ 1])$.
The affine controller steers the Gaussian mixture to a single Gaussian (Figure \ref{fig:multiGaussian2Dstate}) with minimal violation of the state constraints (Table \ref{tab:cost_violation}).
The input remains within acceptable limits (Table \ref{tab:cost_violation}) despite the multi-modal nature of the noise (Figure \ref{fig:multiGaussian2Dinput}).
The deviation between final and desired pdfs are much smaller than seen with the Laplace pdf (Table~\ref{tab:pdf_deviation}).
This is likely because 
the controller alters the weights of the multi-modal Gaussian elements to match the desired, final Gaussian density.

\section{Conclusions}\label{sec:conclusion}

We have formulated a tractable solution of the distribution steering problem under general, not necessarily Gaussian,  disturbances. 
We showed that using Boole's inequality and characteristic functions, we can turn the problem into a nonlinear optimization problem.
Future work will aim to further utilize the structure of the CFs to obtain faster, real-time solutions and extend the approach to nonlinear systems.

\section*{Acknowledgement}\label{sec:ack}

We thank Jack Ridderhof for several discussions and Adam Thorpe for providing Figure \ref{fig:dist_steering_visual}.
This work has been supported in part by the National Science Foundation under award CNS-1836900 and
by NASA under the University Leadership Initiative award \#80NSSC20M0163.
Any opinions, findings, and conclusions or recommendations expressed in this material are those of the authors and do not necessarily reflect the views of the NSF or any NASA entity.

\begin{table*}[t]
	\centering
	\begin{tabular}{c||c|c|c|c|c|c|c|c}
		\hline\hline 
		Disturbance & $\Delta_{\psi_{\bfx_N},1}$ & $D_1$ & $\Delta_{\psi_{\bfx_N},2}$ & $D_2$ & $\Delta_{\psi_{\bfx_N},3}$ & $D_3$ & $\Delta_{\psi_{\bfx_N},4}$ & $D_4$\\\hline\hline
		Gaussian~(\ref{subsec:gauss}) & $3.83\mathrm{E}-4$ & $2.6\mathrm{E}-3$ & $1.69$ & $10.63$ & $3.36\mathrm{E}-6$ & $2.11\mathrm{E}-5$ & $4.29$ & $33.38$ \\\hline
		Laplace~(\ref{subsec:laplace}) & $0.037$ & $0.346$ & $0.077$ & $0.6838$ & $0.096$ & $0.270$ & $4.46$ & $33.16$ \\\hline
		Gaussian Mixture~(\ref{subsec:multiGauss})& 0.002& 0.016 & 0.001 & 0.011 & $2.94 E-4$& 0.031 & 4.94 & 31.87 \\\hline
	\end{tabular}
	\caption{The largest deviation between the actual and desired pdfs \eqref{eq:joint_matching_indiv}, compared to the $L_1$ distance \eqref{eq:marginal_matching} for each scenario.
	The controller from \eqref{opt:resulting_optimization} yields an $L_1$ distance that upper bounds the largest deviation in each case.}
	\label{tab:pdf_deviation}
\end{table*}

\begin{table*}[t]
	\centering
	\begin{tabular}{c||c|c||c|c|c|c}
		\hline\hline
		Disturbance & $J(K,v)$ & $J_{\textrm{MC}}(K,v)$ & $\Delta_X$ & $\Delta_{X,\textrm{MC}}$ & $\Delta_U$ & $\Delta_{U,\textrm{MC}}$ \\\hline\hline
		Gaussian~(\ref{subsec:gauss}) & $53.17$ & $49.36$ & $0.098$ & $0.052$ & $0.0975$ & $7\mathrm{E}-4$ \\\hline
		Laplace~(\ref{subsec:laplace}) & $41.84$ & $41.81$ & $0.0983$ & $0.0572$ & $0.0977$ & $0.0103$ \\\hline
		Gaussian Mixture~(\ref{subsec:multiGauss}) & $49.74$ & $46.18$ & $0.098$ & $0.009$ & $0.098$ & $0.015$ \\\hline
	\end{tabular}
	\caption{Cost and risk allocation (for the state and the input) when solving \eqref{opt:resulting_optimization}, and averaged values from $10^4$ Monte-Carlo (MC) samples for validation.
	The MC average cost is consistent with the computed cost, and the MC state and input constraint violations are lower than the computed violations.}
	\label{tab:cost_violation}
\end{table*}

\begin{figure*}[htb!]
	\centering
	\begin{subfigure}{\textwidth}
		\centering
		\includegraphics[scale=0.97]{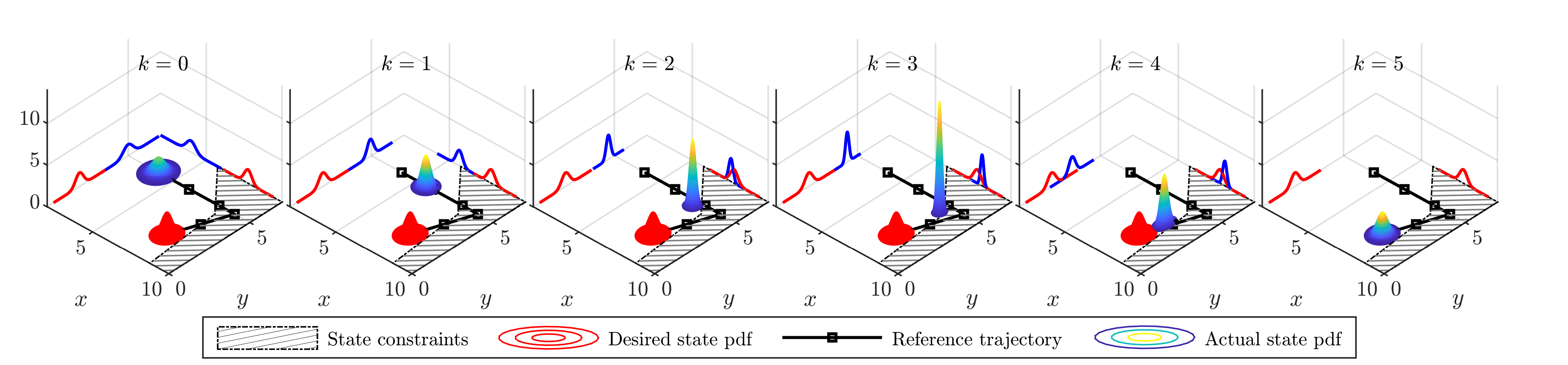}
		\caption{State evolution ($x$ and $y$) over 5 timesteps, subject to state chance constraints and terminal density constraints. The system is steered from the initial density to the final, desired density without collision, even though the reference trajectory violates the constraints.}
		\label{fig:gaussian2Dstate}
	\end{subfigure}
	\begin{subfigure}{\textwidth}
		\centering
		\includegraphics[scale=0.97]{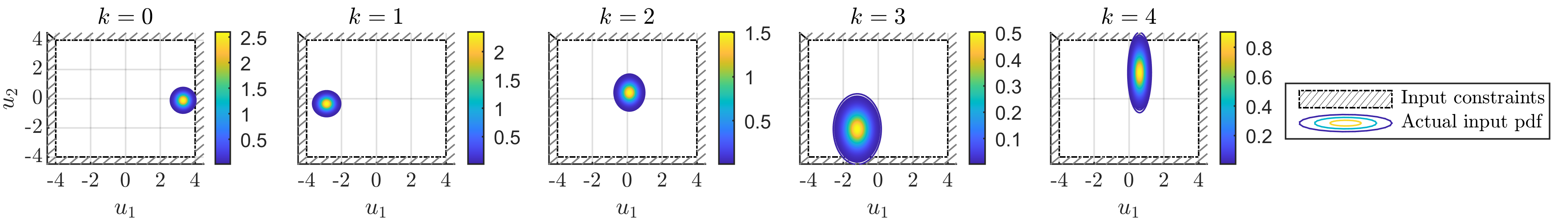}
		\caption{Inputs $u_1$ and $u_2$ satisfy input chance constraints with violation less than $\Delta_U$.}
		\label{fig:gaussian2Dinput}
	\end{subfigure}
	\caption{Distribution steering from one Gaussian distribution to another Gaussian distribution.}
\end{figure*}

\begin{figure*}[t!]
	\centering
	\begin{subfigure}{\textwidth}
		\centering
		\includegraphics[scale=0.97]{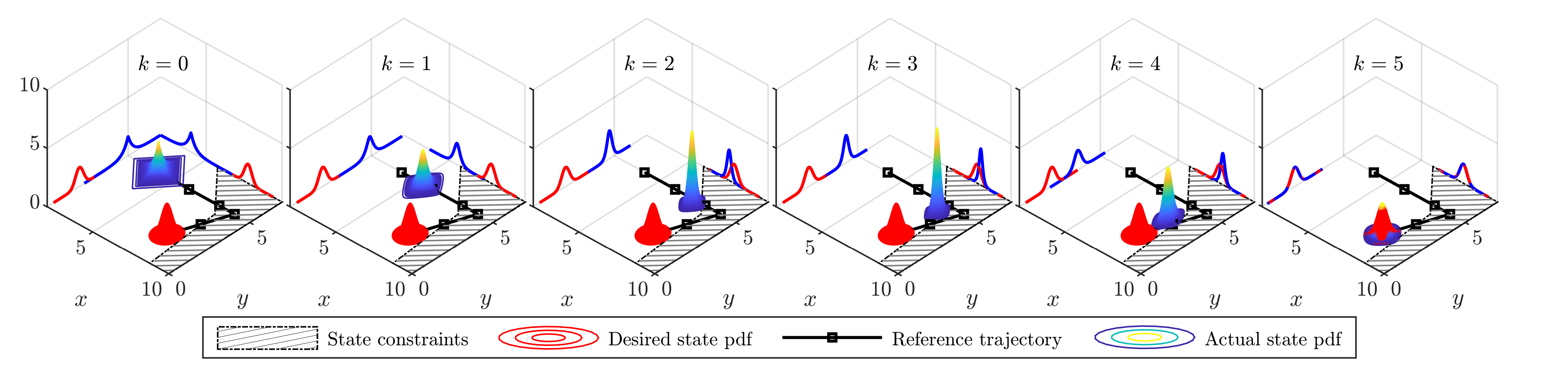}
		\caption{State evolution ($x$ and $y$) over 5 time steps, subject to state chance and terminal density constraints. The Laplace distribution is non-smooth at its peak, and is heavy-tailed. Our approach drives the system from a Laplace distribution to a Gaussian distribution, while maintaining state constraint violation below $\Delta_X$.}
		\label{fig:laplace2Dstate}
	\end{subfigure}
	\begin{subfigure}{\textwidth}
		\centering
		\includegraphics[scale=0.97]{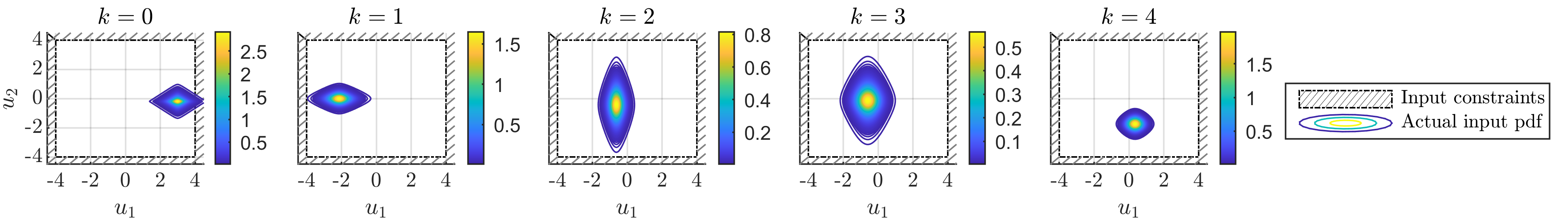}
		\caption{Inputs $u_1$ and $u_2$ satisfy input chance constraints with violation less than $\Delta_U$.}
		\label{fig:laplace2Dinput}
	\end{subfigure}
	\caption{Distribution steering from a  Laplace distribution to a Gaussian distribution.}
\end{figure*}

\begin{figure*}[t!]
	\centering
	\begin{subfigure}{\textwidth}
		\centering
		\includegraphics[scale=0.97]{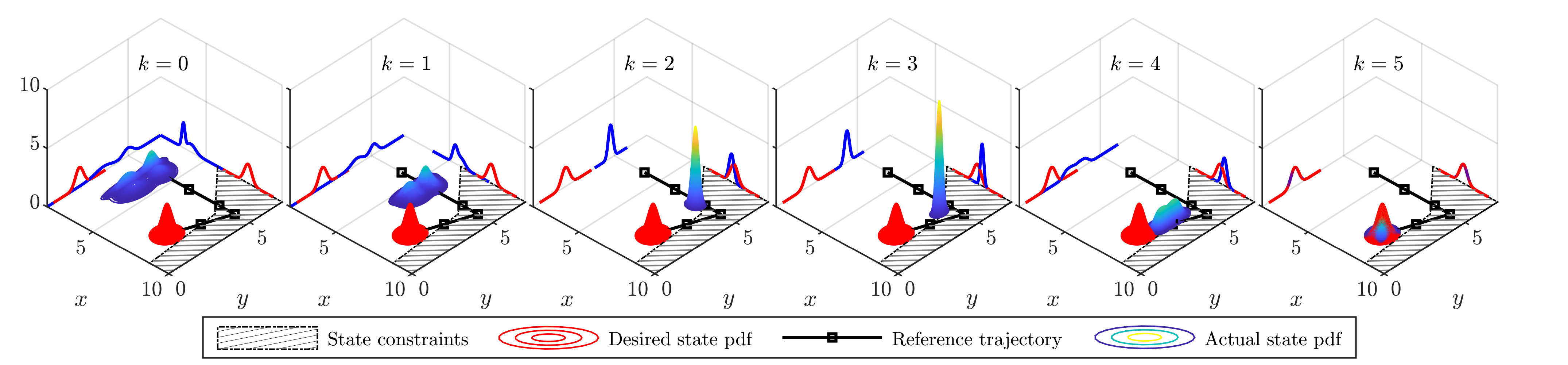}
		\caption{State evolution ($x$ and $y$) over timesteps, subject to the state chance and terminal density constraints. The multi-modal Gaussian is transformed into a Gaussian with a single mode with minimal state constraint violation.}
		\label{fig:multiGaussian2Dstate}
	\end{subfigure}
	\begin{subfigure}{\textwidth}
		\centering
		\includegraphics[scale=0.97]{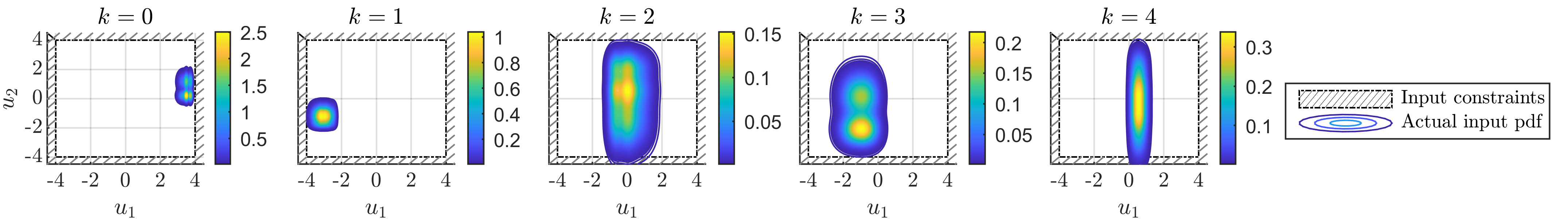}
		\caption{Inputs $u_1$ and $u_2$ satisfy input chance constraints with violation less than $\Delta_U$.}
		\label{fig:multiGaussian2Dinput}
	\end{subfigure}
	\caption{Distribution steering from a  Gaussian mixture to a Gaussian distribution.}
\end{figure*}

\bibliographystyle{IEEEtran}
\bibliography{IEEEabrv,shortIEEE,refs}

\begin{thebibliography}{10}
\providecommand{\url}[1]{#1}
\csname url@samestyle\endcsname
\providecommand{\newblock}{\relax}
\providecommand{\bibinfo}[2]{#2}
\providecommand{\BIBentrySTDinterwordspacing}{\spaceskip=0pt\relax}
\providecommand{\BIBentryALTinterwordstretchfactor}{4}
\providecommand{\BIBentryALTinterwordspacing}{\spaceskip=\fontdimen2\font plus
\BIBentryALTinterwordstretchfactor\fontdimen3\font minus
  \fontdimen4\font\relax}
\providecommand{\BIBforeignlanguage}[2]{{%
\expandafter\ifx\csname l@#1\endcsname\relax
\typeout{** WARNING: IEEEtran.bst: No hyphenation pattern has been}%
\typeout{** loaded for the language `#1'. Using the pattern for}%
\typeout{** the default language instead.}%
\else
\language=\csname l@#1\endcsname
\fi
#2}}
\providecommand{\BIBdecl}{\relax}
\BIBdecl

\bibitem{Halder2}
K.~F. Caluya and A.~Halder, ``Reflected {Schrödinger} bridge: Density control
  with path constraints,'' in \emph{American Control Conference}, New Orleans,
  LA, 2021, pp. 1137--1142.

\bibitem{balci2020covariance}
I.~M. Balci and E.~Bakolas, ``Covariance steering of discrete-time stochastic
  linear systems based on wasserstein distance terminal cost,'' \emph{IEEE
  Control Systems Letters}, vol.~5, no.~6, pp. 2000--2005, 2020.

\bibitem{Chen1}
Y.~Chen, T.~T. Georgiou, and M.~Pavon, ``Optimal steering of a linear
  stochastic system to a final probability distribution -- {Part} {I},''
  \emph{IEEE Trans. Automatic Control}, vol.~61, no.~5, pp. 1158--1169, 2016.

\bibitem{Williams2}
G.~Williams, P.~Drews, B.~Goldfain, J.~M. Rehg, and E.~A. Theodorou,
  ``Information-theoretic model predictive control: Theory and applications to
  autonomous driving,'' \emph{IEEE Transactions on Robotics}, vol.~34, no.~6,
  pp. 1603--1622, 2018.

\bibitem{Williams1}
\BIBentryALTinterwordspacing
G.~Williams, A.~Aldrich, and E.~A. Theodorou, ``Model predictive path integral
  control: From theory to parallel computation,'' \emph{Journal of Guidance,
  Control, and Dynamics}, vol.~40, no.~2, pp. 344--357, 2017. [Online].
  Available: \url{https://doi.org/10.2514/1.G001921}
\BIBentrySTDinterwordspacing

\bibitem{Max1}
M.~Goldshtein and P.~Tsiotras, ``Finite-horizon covariance control of linear
  time-varying systems,'' in \emph{56th IEEE Conference on Decision and
  Control}, Melbourne, Australia, Dec 12--15 2017, pp. 3606--3611.

\bibitem{pp_kazu}
K.~Okamoto and P.~Tsiotras, ``Optimal stochastic vehicle path planning using
  covariance steering,'' \emph{IEEE Robotics and Automation Letters}, vol.~4,
  no.~3, pp. 2276--2281, 2019.

\bibitem{bakolas2018finite}
E.~Bakolas, ``Finite-horizon covariance control for discrete-time stochastic
  linear systems subject to input constraints,'' \emph{Automatica}, vol.~91,
  pp. 61--68, 2018.

\bibitem{JP1}
J.~Pilipovsky and P.~Tsiotras, ``Chance-constrained optimal covariance steering
  with iterative risk allocation,'' in \emph{American Control Conference}, New
  Orleans, LA, 2021, pp. 2011--2016.

\bibitem{JoshJack}
J.~Ridderhof, J.~{Pilipovsky}, and P.~{Tsiotras}, ``Chance-constrained
  covariance control for low-thrust minimum-fuel trajectory optimization,'' in
  \emph{AAS/AIAA Astrodynamics Specialist Conference}, Lake Tahoe, CA, Aug
  9--13 2020.

\bibitem{PP_blackmore}
L.~Blackmore, H.~X. Li, and B.~C. Williams, ``A probabilistic approach to
  optimal robust path planning with obstacles,'' in \emph{American Control
  Conference}, Minneapolis, MN, June 14--16, 2006, pp. 1--7.

\bibitem{Boole}
A.~Prékopa, ``{Boole-Bonferroni} inequalities and linear programming,''
  \emph{Operations Research}, vol.~36, no.~1, pp. 145--162, 1988.

\bibitem{sivaramakrishnan2020convexified}
\BIBentryALTinterwordspacing
V.~Sivaramakrishnan, A.~P. Vinod, and M.~Oishi, ``Convexified open-loop
  stochastic optimal control for linear non-gaussian systems,'' \emph{IEEE
  Transactions on Automatic Control, (Submitted)}. [Online]. Available:
  \url{https://arxiv.org/abs/2010.02101}
\BIBentrySTDinterwordspacing

\bibitem{ECF}
V.~Sivaramakrishnan and M.~Oishi, ``Fast, convexified stochastic optimal
  open-loop control for linear systems using empirical characteristic
  functions,'' \emph{IEEE Control Systems Letters}, vol.~4, no.~4, pp.
  1048--1053, 2020.

\bibitem{billingsley_probability_2012}
P.~Billingsley, \emph{Probability and Measure}.\hskip 1em plus 0.5em minus
  0.4em\relax Wiley, 2008.

\bibitem{okamoto2018optimal}
K.~Okamoto, M.~Goldshtein, and P.~Tsiotras, ``Optimal covariance control for
  stochastic systems under chance constraints,'' \emph{IEEE Control Systems
  Letters}, vol.~2, no.~2, pp. 266--271, 2018.

\bibitem{lukacs_characteristic_1970}
E.~Lukacs, \emph{Characteristic Functions}, 2nd~ed.\hskip 1em plus 0.5em minus
  0.4em\relax London: Griffin, 1970.

\bibitem{ushakov_selected_1999}
N.~G. Ushakov, \emph{\BIBforeignlanguage{eng}{Selected Topics in Characteristic
  Functions}}, ser. Modern probability and statistics.\hskip 1em plus 0.5em
  minus 0.4em\relax Utrecht: VSP, 1999, no.~4.

\bibitem{cramer_mathematical_1999}
H.~Cramér, \emph{Mathematical Methods of Statistics}, ser. Princeton Landmarks
  in Mathematics and Physics.\hskip 1em plus 0.5em minus 0.4em\relax Princeton:
  Princeton University Press, 1999.

\bibitem{gil-pelaez_note_1951}
J.~Gil-Pelaez, ``\BIBforeignlanguage{en}{Note on the inversion theorem},''
  \emph{\BIBforeignlanguage{en}{Biometrika}}, vol.~38, no. 3-4, pp. 481--482,
  1951.

\bibitem{witkovsky_numerical_2016}
V.~Witkovsky, ``Numerical inversion of a characteristic function: {An}
  alternative tool to form the probability distribution of output quantity in
  linear measurement models,'' \emph{ACTA IMEKO}, vol.~5, no.~3, pp. 32--44,
  2016.

\bibitem{ono2008iterative}
M.~Ono and B.~Williams, ``Iterative risk allocation: A new approach to robust
  model predictive control with a joint chance constraint,'' in \emph{47th IEEE
  Conference on Decision and Control}, 2008, pp. 3427--3432.

\bibitem{farina_stochastic_2016}
M.~Farina, L.~Giulioni, and R.~Scattolini, ``\BIBforeignlanguage{en}{Stochastic
  linear model predictive control with chance constraints – a review},''
  \emph{\BIBforeignlanguage{en}{J. of Process Control}}, vol.~44, pp. 53--67,
  Aug. 2016.

\bibitem{mesbah2016stochastic}
A.~Mesbah, ``Stochastic model predictive control: An overview and perspectives
  for future research,'' \emph{{IEEE} Control Syst. Mag.}, vol.~36, no.~6, pp.
  30--44, 2016.

\end{thebibliography}

\appendix

\section*{A.~Proof of Theorem~\ref{thm:joint_matching}}

\setcounter{equation}{0}
\renewcommand{\theequation}{A.\arabic{equation}}

\begin{proof}
Since 
\begin{align}
	&  \Big| \big| \int_{\R^n}\exp(\i t^\intercal z)\upvarphi_{\bfx_N}(t) \d t \big| - \big|\int_{\R^n}\exp(\i t^\intercal z) \upvarphi_{\bfx_f}(t) \d t \big| \Big| \nonumber\\
	&\leq \int_{\R^n} \left |\exp(\i t^\intercal z)\upvarphi_{\bfx_N}(t) - \exp(\i t^\intercal z)\upvarphi_{\bfx_f}(t)\right|  \d t,
\end{align}
by
multiplying both sides by $({1}/{2\pi})^n$ and using \eqref{eq:inv_FT}, yields
\begin{equation}
    |\psi_{\bfx_N}(z) - \psi_{\bfx_f}(z)| \leq \left(\frac{1}{2\pi}\right)^n  \|\upvarphi_{\bfx_N} - \upvarphi_{\bfx}\|_{1}.
\end{equation}
Lastly, since this holds \textit{for all} $z\in\R^{n}$, we get \eqref{eq:joint_matching_1}.
\end{proof}

\section*{B.~Proof of Theorem~\ref{Theorem4}}
\setcounter{equation}{0}
\renewcommand{\theequation}{B.\arabic{equation}}

\begin{proof}
By definition of the $L_1$ distance, for each $i\in\mathbb{N}_{[1,N]}$,
\begin{align}
    \frac{1}{2 \pi}\int_{\R}|\upvarphi_{e_{i,n}^\intercal \bfx_{N}} (t_i) - \upvarphi_{\bfx_{f,i}} (t_i) | \ \d t_i \leq \epsilon_i. \label{eq:indiv_bounds}
\end{align}
Let $a_i = \upvarphi_{e_{i,n}^\intercal \bfx_{N}}(t_i)$ and $b_i = \upvarphi_{\bfx_{f,i}}(t_i)$.
Then, since $|\upvarphi_{e_{i,n}^\intercal \bfx_{N}}(t_i)|,\ |\upvarphi_{\bfx_{f,i}}(t_i)|\leq 1$, it follows that
$|\upvarphi_{\bfx_{N}}(t) - \upvarphi_{\bfx_f}(t)| = |\prod_i \upvarphi_{e_{i,n}^\intercal E_N \bfX}(t_i) - \prod_i\upvarphi_{\bfx_{f,i}}(t_i)| \leq \sum_i |\upvarphi_{e_{i,n}^\intercal \bfx_{N}}(t_i) - \upvarphi_{\bfx_{f,i}}(t_i)|$,
where we have used the fact that 
for $a_i,\ b_i\in\mathbb{C},\ i \in\mathbb{N}_{[1,n]}$ where $|a_i|,|b_i|\leq 1$, $|\prod_i a_i - \prod_i b_i| \leq \sum_i |a_i - b_i|$. 
The result now follows immediately from the definition of $D(K,v)$.
\end{proof}

\end{document}